\documentclass{amsart}
\usepackage{amsmath,amsthm,amsfonts,amscd,amssymb,latexsym,mathrsfs,graphicx,color,cite}

\def\numequation{\addtocounter{subsubsection}{1}\begin{equation}}
\def\nummultline{\addtocounter{subsubsection}{1}\begin{multline}}

\theoremstyle{plain}
\newtheorem{theorem}{Theorem}
\newtheorem{corollary}[theorem]{Corollary}
\newtheorem{lemma}[theorem]{Lemma}
\newtheorem{proposition}[theorem]{Proposition}

\theoremstyle{definition}
\newtheorem{remark}[theorem]{Remark}

\newtheorem{definition}[theorem]{Definition}
\newtheorem{conjecture}[theorem]{Conjecture}
\numberwithin{theorem}{subsection}



\newcommand{\C}{\mathbf{C}}

\newcommand{\SL}{\text{SL}}

\newcommand{\BR}{\mathbf{R}}
\newcommand{\BZ}{\mathbf{Z}}

\newcommand{\BC}{\mathbf{C}}

\newcommand{\BQ}{\mathbf{Q}}

\newcommand{\BA}{\mathbf{A}}



\newcommand{\ten}{\otimes}

\renewcommand{\ten}{\otimes}

\newcommand{\ip}[2]{\langle #1, #2 \rangle}


\newcommand{\MO}{\mathcal{O}}

\newcommand{\mS}{\mathcal{S}}
\newcommand{\mO}{\mathcal{O}}

\newcommand{\fp}{\mathfrak{p}}

\newcommand{\fg}{\mathfrak{g}}

\newcommand{\tr}{\operatorname{tr}}

\newcommand{\del}{\delta}

\newcommand{\Gal}{\text{Gal}}

\newcommand{\Kpn}{K(\fp^n)}

\renewcommand{\th}{^{\text{th}}}
\renewcommand{\phi}{\varphi}

\newcommand{\dom}{\backslash}

\newcommand{\Vol}{\text{Vol}}
\newcommand{\const}{c}

\newcommand{\BAF}{\BA_F}
\newcommand{\BAFf}{\BAF^f}
\newcommand{\BAE}{\BA_E}

\newcommand{\tf}{f}
\newcommand{\tfn}{\phi(n)}

\newcommand{\Trn}{\operatorname{Tr}(n)}
\newcommand{\Kv}{K_v}
\newcommand{\Gv}{G_v}
\newcommand{\Kvppn}{K_{v_\fp}(\fp^n)}
\newcommand{\Kfpn}{K_f(\fp^n)}
\renewcommand{\det}{\mathrm{det}}
\newcommand{\norm}{{\rm Nm}}
\newcommand{\GalEF}{\Gamma_{E/F}}
\newcommand{\conj}{c}
\newcommand{\Aut}{L^2_{\mathrm disc}}


\newcommand{\Rdisci}{R_{\mathrm {disc}, \psi}}
\newcommand{\Idisci}{I_{\mathrm {disc}, \psi}}

\newcommand{\Sdisci}{S_{\mathrm{disc}, \psi}}

\newcommand{\Idiscr}{I_{\mathrm{disc, reg}}}
\newcommand{\Rdiscr}{R_{\mathrm{disc, reg}}}

\author[M. Gerbelli-Gauthier]{Mathilde Gerbelli-Gauthier}
\email{mathilde@math.uchicago.edu}
\address{Department of Mathematics, University of Chicago,
	5734 S.\ University Ave., Chicago, IL 60637}

\title[Growth of cohomology]
{Growth of cohomology of arithmetic groups and the stable trace formula: the case of $U(2,1)$.}
\date{\today}

\begin{document}
	\begin{abstract}
		We present a strategy to use the stable trace formula to compute growth in the cohomology of cocompact arithmetic lattices in a reductive group $G$. We then implement it in the case where $G = U(2,1)$, using results of Rogwaski. This recovers a result of Marshall. 
	\end{abstract} 
\maketitle 

\tableofcontents

\section{Introduction}
Let $G$ denote the real points of an anisotropic reductive algebraic group
over a number field $F$,
let $K$ be a maximal compact subgroup of $G$ and $X$ denote the symmetric space associated to $G$,
let $\Gamma$ be a (cocompact) congruence subgroup of $G$, let $\fp$ be an ideal of $F$,
and, for each $n \geq 1$,
 let $\Gamma(\fp^n)$ denote the congruence subgroup of $\Gamma$
of full level $\fp^n$.
The goal of this paper
is to present a strategy to bound the rate
of growth of the cohomology groups $H^i( \Gamma(\fp^n)\backslash X, \C)$
as $n \to \infty$.   

If $G$ does not admit discrete series, precise rates of growth are unknown even in the case of middle (or near middle) degrees, for which
one can only expect bounds
(see e.g. \cite{CE}).   On the other hand, if $G$ does admit
discrete series, then a classical
result of de George--Wallach \cite{DGW} shows that
the dimension of middle cohomology grows proportionally to
the index $[\Gamma: \Gamma(\fp^n)],$ or equivalently, like the volume of $X(\fp^n) = \Gamma(\fp^n) \dom X$. 
We would like to obtain analogous result for degrees $i$ below
the middle (and hence also in degrees above the middle,
by Poincar\'e duality), for as many choices of $i$ and $G$
as possible.  

Matsushima's formula \cite{Ma67} expresses dimensions of cohomology in terms of automorphic representations, and Vogan-Zuckerman \cite{VZ} have classified the finite list of representations $\pi$ of $G$ which can contribute to cohomology. These representations are non-tempered, and Sarnak-Xue have conjectured that the failure of $\pi$ to belong to $L^2(G)$ controls the growth of $m(\pi, \Gamma(\fp^n))$, the multiplicity of $\pi$ at level $\Gamma(\fp^n)$. 
\begin{conjecture}[Sarnak-Xue \cite{SX91}] 
Let $p(\pi)$ be the infimum over $p\geq2$ such that $K$-finite matrix coefficient of $\pi$ are in $L^p(G)$. Then for \[ m(\pi, \Gamma(\fp^n)) \ll_\epsilon \Vol(X(\fp^n))^{(2/p(\pi))+\epsilon} \quad \text{for any $\epsilon>0.$} \]
\end{conjecture}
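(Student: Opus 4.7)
The plan is to bound the multiplicity $m(\pi, \Gamma(\fp^n))$ via the trace formula. Choose an adelic test function $\tf_n = \tf_\infty \cdot \tf^\infty$ such that $\tr \pi(\tf_n)$ picks out precisely the contribution of $\pi$ to the spectrum at level $\Kfpn$: at the archimedean place $\tf_\infty$ should be a pseudocoefficient (or Euler--Poincar\'e function) isolating the cohomological archimedean component $\pi_\infty$ furnished by Vogan--Zuckerman, while at the finite places $\tf^\infty$ is essentially the normalized characteristic function of $\Kfpn$. With this setup, the left-hand side of the trace formula evaluates, up to controlled contributions from other representations, to $m(\pi, \Gamma(\fp^n))$.

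The next step is to apply Arthur's stable trace formula rather than the naive Arthur--Selberg one. By Arthur's endoscopic classification, a non-tempered cohomological $\pi$ should arise as a functorial transfer from a global Arthur parameter attached to a proper endoscopic group $H \subsetneq G$, and its spectral contribution to the trace formula on $G$ is accounted for by a stable spectral contribution on $H$. The point is that on $H$ the corresponding representation $\pi_H$ is closer to tempered (ideally a discrete series), so the classical de George--Wallach theorem applied on $H$ yields multiplicity growth linear in $\Vol(X_H(\fp^n))$.

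The predicted exponent $2/p(\pi)$ of Sarnak--Xue should then emerge from comparing the volumes of the arithmetic quotients of the endoscopic group $H$ with those of $G$. More precisely, the deeper $\pi$ sits in the non-tempered spectrum (smaller $p(\pi)$), the larger the endoscopic group $H$ from which it is lifted, and one should find
\[ \Vol(X_H(\fp^n)) \;\ll_\epsilon\; \Vol(X(\fp^n))^{2/p(\pi) + \epsilon}, \]
with the exponent matching the $L^p$-behaviour of matrix coefficients of $\pi$.

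The main obstacle is the interaction of the local test function $\tf^\infty$ with endoscopic transfer: one needs the characteristic function of the deep-level subgroup $\Kfpn$ to transfer to a function on $H$ whose orbital integrals and spectral action are explicit enough to yield a sharp volume bound. This requires some form of the fundamental lemma adapted to the congruence subgroup, together with a careful analysis of the continuous and residual terms in the stabilized trace formula, which are a priori of the same order of magnitude as the discrete contribution one wants to estimate. Arthur's endoscopic classification is of course also required, and is unconditional only in special cases; for $G = U(2,1)$, Rogawski's work supplies both the classification and the endoscopic transfer, which is precisely what makes this case tractable.
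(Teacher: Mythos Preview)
The statement you are addressing is a \emph{conjecture}, and the paper does not contain a proof of it; the paper only establishes the special case $G = U(2,1)$ (the main theorem in the introduction), and even there the result is phrased as a bound on $\dim H^1$ rather than directly on $m(\pi,\Gamma(\fp^n))$. What you have written is not a proof but a strategy outline, and as such it tracks the paper's own Section~\ref{tracegeneral} closely: a test function isolating the cohomological $\pi_\infty$, the stable trace formula, reduction to an endoscopic group $H$ via the stable multiplicity formula, Ferrari's result on transfer of congruence characteristic functions, and a limit-multiplicity estimate on $H$.

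As a proof of the general conjecture, however, your outline has genuine gaps that the paper itself flags. First, it is not true that every non-tempered cohomological $\pi$ is governed by a \emph{proper} endoscopic group: the paper explicitly notes that when $s_\psi$ is trivial one has $H_\psi = G$ and the method gives nothing. Second, the asserted comparison $\Vol(X_H(\fp^n)) \ll_\epsilon \Vol(X(\fp^n))^{2/p(\pi)+\epsilon}$ is the crux of the matter and is nowhere justified; in the $U(2,1)$ case the paper does not derive the exponent from $p(\pi)$ at all, but obtains $\norm(\fp)^{3n}$ by computing the two pieces separately (the factor $\norm(\fp)^{2n}$ from Ferrari's scaling and the factor $\norm(\fp)^{n}$ from counting characters of $U(1)$). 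Third, your invocation of de George--Wallach on $H$ is not what the paper does: in the $U(2,1)$ case the relevant representations of $H$ are one-dimensional characters, and the bound comes from counting ray class characters, not from a discrete-series limit-multiplicity theorem. Finally, the endoscopic classification is only available for classical groups, and the paper treats this as an assumption. In short, you have correctly identified the paper's program, but it is a program rather than a proof, and the paper presents it as such.
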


Our strategy is based on an application of the stable trace formula developed by Arthur \cite{A}. Now letting $G$ denote the algebraic group over $F$, this formula writes the trace $I^G_{\mathrm{disc}}(f)$ of a test
function $f$ on the discrete spectrum
of automorphic forms for $G$ as a sum $S^G(f) + \sum_H S^H(f^H),$
where $S^G$ is a {\it stable} distribution, where $H$ ranges
over the endoscopic groups of $G$, and where $f^H$ denotes
the so-called transfer of $f$ to $H$.     
This sum can furthermore be decomposed according to Arthur parameters
$\psi$ taking values in the dual group $\widehat{G}$ (for simplicity
in this discussion, we ignore the distinction between dual groups
and $L$-groups); for the $\psi$-part, we only obtain contributions
for those $H$ for which $\psi$ factors through~$\widehat{H}$.

The key point is that the (local at infinity) parameters of the cohomological representations have been 
described, following work of Adams and Johnson \cite{AJ} and Arancibia, Moeglin and Renard \cite{AMR18}.  From these results,
one can see that many cohomological Arthur parameters are 
necessarily unstable, i.e.\ factor through some $\widehat{H}$. From this characterization, we can deduce that in some cases, for test functions $f$ chosen to measure the dimension of cohomology, the character identities dictated by $\psi$ allow us to control $I^G_\psi(f)$ in terms of $S^H_\psi(f^H)$.

Now a result of Ferrari based on the fundamental lemma \cite{Fe}
shows that, up to a certain easily computable volume factor,
the transfer of the characteristic function of $\Gamma(\fp^n)$
for $G$ is equal to the characteristic function of the
corresponding group $\Gamma(\fp^n)$ for $H$.  This 
allows us to
relate the growth of the $\psi$-contribution to the cohomology
of $G$ to the growth of the limit multiplicity for $H$ of representations corresponding to $\psi^H$. 

This gives an inductive approach to computing cohomology growth,
which will yield upper bounds (and hopefully, with additional work,
and perhaps additional assumptions, lower bounds) for the below-the-middle
degrees of cohomology associated to $G$ in terms of the growth of the middle
degree cohomology associated to direct factors of various endoscopic subgroups
$H$ of~$G$.   Exactly what results we obtain in general remains to be seen;
in this paper, we focus on the case when $G = U(2,1)$.   In this case, we recover known
results of Simon Marshall \cite{Ma14} (which confirm Sarnak-Xue's conjecture in this particular case) using our proposed framework and approach.

\begin{theorem}
	\label{intro theorem} Let $\Gamma(\fp^n)$ be a cocompact arithmetic lattice in $U(2,1)$. Then \[ \dim H^1(\Gamma(\fp^n), \BC) \ll N(\fp)^{3n}. \]
\end{theorem}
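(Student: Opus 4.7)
The plan is to apply the strategy outlined in the introduction to $G = U(2,1)$, with principal elliptic endoscopic subgroup $H = U(1,1) \times U(1)$. By Matsushima's formula,
\[
\dim H^1(\Gamma(\fp^n) \dom X, \BC) = \sum_{\pi_\infty} m(\pi_\infty, \Gamma(\fp^n)) \cdot \dim H^1(\fg, K; \pi_\infty),
\]
where the sum runs over the finite set of unitary $(\fg, K)$-modules $\pi_\infty$ contributing to $H^1$, classified by Vogan--Zuckerman \cite{VZ}. Since the middle degree for $U(2,1)$ is two, every such $\pi_\infty$ is non-tempered, and is an Adams--Johnson representation $A_\fq(\lambda)$ attached to a $\theta$-stable parabolic with Levi isomorphic to $U(1,1) \times U(1)$.

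Next, by \cite{AJ, AMR18}, the archimedean Arthur parameter $\psi$ of each such $\pi_\infty$ is a non-tempered elliptic parameter of shape $\chi_1 \oplus \bigl(\chi_2 \boxtimes \mathrm{Sym}^1\bigr)$ as a three-dimensional representation of $L_F \times \SL_2$, and factors through the embedding $\widehat{H} \hookrightarrow \widehat{G}$. In particular $\psi$ is unstable, so the stable distribution $S^G_\psi$ contributes nothing. I apply Arthur's stable trace formula to $f = f_\infty \cdot \mathbf{1}_{\Kfpn}$, with $f_\infty$ a pseudocoefficient for a chosen cohomological $\pi_\infty^0$, so that the $\psi$-isotypic spectral contribution collapses to $\Vol(\Kfpn) \cdot m(\pi_\infty^0, \Gamma(\fp^n))$. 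The endoscopic decomposition then reads
\[
\Vol(\Kfpn) \cdot m(\pi_\infty^0, \Gamma(\fp^n)) \;=\; \sum_{H'} \iota(G, H')\, S^{H'}_\psi(f^{H'}),
\]
the sum running over the endoscopic groups $H'$ through which $\psi$ factors, with the dominant contribution coming from $H$ itself. Ferrari's transfer formula \cite{Fe} identifies $(\mathbf{1}_{\Kfpn})^{H'}$ with the characteristic function of the corresponding principal congruence subgroup of $H'$, up to an explicit volume factor.

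The archimedean transfer $f_\infty^H$ is a pseudocoefficient on $H$ whose spectral support consists of a discrete-series representation $\sigma_\infty$ of $U(1,1)$ (tensored with a character of the $U(1)$ factor) contributing to the \emph{middle} degree cohomology of $U(1,1)$. Since $\sigma_\infty$ is a tempered, integrable discrete series, de George--Wallach \cite{DGW} gives
\[
m(\sigma_\infty, \Gamma_H(\fp^n)) \;\ll\; \Vol\bigl(\Gamma_H(\fp^n) \dom H / Z_H\bigr) \;\asymp\; N(\fp)^{3n},
\]
the exponent three arising because $\psi$ rigidifies the central character of $\sigma_\infty$, so the relevant volume is that of the arithmetic quotient of $H/Z_H$, a group of dimension three. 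Tracking the volume factors produced by Ferrari's transfer through the endoscopic identity above, and summing over the finitely many cohomological $\pi_\infty^0$, yields the desired bound $\dim H^1(\Gamma(\fp^n), \BC) \ll N(\fp)^{3n}$.

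The main obstacle I expect is carrying out the second step rigorously: one must verify, using Rogawski's endoscopic classification for $U(2,1)$, that every cohomological Arthur parameter relevant to $H^1$ is indeed endoscopic in the required sense, and match pseudocoefficients and packet structures on $G$ and on $H$ consistently when $\psi$ factors through more than one endoscopic group, so that the unstable character identities force all contributions to combine correctly. A secondary technical point is the careful bookkeeping of volume factors coming out of Ferrari's transfer, since these are what ultimately force the exponent to be $3n$ rather than some larger power.
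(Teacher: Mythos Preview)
Your architecture matches the paper's, but the endgame contains a genuine error. The representations that appear on $H$ are \emph{characters}, not discrete series. The Arthur $\SL_2$ in $\psi = \chi_1 \oplus (\chi_2 \boxtimes \mathrm{Sym}^1)$ sits inside the $\widehat{U(2)}$-block, so after factoring through $\widehat{H}$ the parameter $\psi^H$ is still non-tempered on the $U(2)$ component and corresponds to a one-dimensional $\xi = (\chi_2'\circ\det)\otimes\chi_1'$. Rogawski's classification makes this explicit: the global packets on $G$ are transferred from automorphic \emph{characters} $\xi$ of $H$, and the $H$-contribution is literally $\tr\,\xi(f^H)$. Thus de~George--Wallach is inapplicable. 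The paper instead bounds $\sum_\xi \tr\,\xi(f^H(n))$ by counting automorphic characters of $H$ with the prescribed archimedean type and level; since such $\xi$ are determined by a Hecke character of $U(1)$, this is a ray-class count yielding $\ll N(\fp)^{n}$. Ferrari's rescaling contributes a further $N(\fp)^{2n}$ (from $d(G,H)=(\dim G-\dim H)/2=2$), and $2n+n=3n$. Your heuristic $\dim(H/Z_H)=3$ lands on the right exponent only by accident: if one actually had discrete series on $H$ with limit multiplicity $\asymp N(\fp)^{3n}$, then after reinstating Ferrari's factor the bound on $G$ would be $N(\fp)^{5n}$, not $N(\fp)^{3n}$.

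A second inaccuracy: $S^G_\psi$ does not vanish. In Rogawski's stabilization the $G$-term is $\tfrac{1}{2}\epsilon(\tfrac{1}{2},\phi)\,\tr\,\Pi(f)$, generically nonzero. The mechanism the paper uses is not vanishing but domination: for a test function with nonnegative local traces, every local factor of the $H$-term carries a $+$ sign while the $G$-term has the same factors with alternating signs, so $\sum_{\pi\in\Pi(\xi)} m(\pi)\,\tr\,\pi(f)\le 2\,\tr\,\xi(f^H)$. This inequality, not an identity, is what feeds into the final estimate.
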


It will not escape the reader familiar with the article \cite{Ma14} that it already contains many elements which appear in our argument, for example bounding growth using endoscopy, as well as relying on character identities in Arthur packets and reducing the problem to a count over elements of ray class groups. We nevertheless hope that the use of the stable trace formula will provided a systematic framework in which to understand growth of cohomology. 
 
\subsection{Structure}
In section \ref{tracegeneral}, we describe our general strategy, and especially our use of the trace formula, in more detail. Then, in section \ref{U(3)} onward, we focus our attention on the case where
$G$ is an inner form of $U(3)$. 
After introducing notation,
we recall some facts about cohomological automorphic representations in general and explain how the trace formula can be used to compute the cohomology of arithmetic lattices. We then give an explicit description of representations of $G$ with nontrivial first cohomology and explain how the stable trace formula allows us to compare them to representations an endoscopic group $H$. We conclude with trace computations on the group $H$ which allow us to obtain the desired growth asymptotics. 

\section*{Acknowledgements}
A large portion of the ideas presented here were developed through conversations with Matt Emerton, and he was credited as a coauthor on earlier versions of this paper. The author would like to Colette Moeglin for helpful conversations, and Laurent Clozel and Simon Marshall for reading earlier drafts and suggesting numerous edits. 
\section{Growth of cohomology via the trace formula} \label{tracegeneral}

In this section, we sketch how to bound the growth of cohomology using the stable trace formula. We refer the reader to Arthur's book \cite{A} for the notation and the key technical results, and assume that $G$ is a (classical) group for which the endoscopic classification of representations is established. Since the groups under consideration are anisotropic, they are not literally those considered in \cite{A}, and we mostly choose this reference for its centrality in the literature. When implementing this method in general, we are more likely to refer to \cite{KMSW} for inner forms of unitary groups and to \cite{T} in the case of orthogonal and symplectic groups.  

\subsection{Bounds using the stable trace formula}

 Our assumption that $G$ is anisotropic implies that the discrete spectrum \[ \Aut(G(F) \dom G(\BAF)) \] constitutes the entire automorphic spectrum of $G$. Instead of working with the entire space $\Aut$, we will restrict our attention to the subspace $\Aut(G(F) \dom G(\BAF))_{reg}$ consisting of representations $\pi$ whose infinitesimal character at infinity is regular. It follows from the work of Vogan-Zuckerman \cite{VZ} that this includes representations whose cohomology with constant coefficients is non-zero. 

We let $\Idiscr$ denote restriction of Arthur's trace formula to the subspace $\Aut(G(F) \dom G(\BAF))_{\textrm{reg}}$, and refer to \cite[\S 3.1]{A} for a description of the various terms appearing in $\Idiscr$. By work of Bergeron-Clozel \cite{BC}, the terms in $\Idiscr$ corresponding to proper Levi subgroups $M \subset G$ all vanish. Thus if we denote the trace of the right-regular representation of $G(\BAF)$ on $\Aut(G(F)\dom G(\BAF))_{reg}$ by $\Rdiscr$, we have for any test function $f$ that \[ \Idiscr(f) = \Rdiscr(f). \]  

Let $\psi$ be a global Arthur parameter for $G$, and assume that the representations in the packet $\Pi_\psi$ have regular infinitesimal character at infinity. Then, following \cite[\S 3.3]{A}, we denote by $\Idisci$ the $\psi$-isotypical component of $\Idiscr$, and similarly for $\Rdiscr$, and we have \[ \Idisci(f) = \Rdisci(f). \]

Next, denote by $\mS_\psi$ the component group of the centralizer of $\psi$ in $\hat{G}$. It is a finite product of copies of $\BZ/2\BZ$ and contains a distinguished element $s_\psi$: the image of the matrix $-I$ in the restriction of $\psi$ to $SL_2$. The elements of $\mS_\psi$ are in correspondence with the endoscopic data $H$ appearing in the stabilization of $\Idisci$. The stabilization of the $\psi$-part of the trace formula is a decomposition: \[ \Idisci(f) = \sum_{s \in \mS_\psi} \Sdisci^{H_s}(f^{H_s}), \] where each $\Sdisci^{H_s}$ is a stable distribution on $H_s$. This distribution is described by Arthur in the \emph{stable multiplicity formula} \cite[Theorem 4.1.2]{A}. A study of the factors appearing in the stable multiplicity formula shows that up to a positive constant $N_{s,\psi}$ depending both on $s$ and $\psi$, but bounded independently of either, one can express \[ \Sdisci^{H_s}(f^H) =  N_{s,\psi}\sum_{\pi \in \Pi_{\psi} }  \epsilon_\psi^G(s_\psi s) \ip{s_\psi s}{\pi} \tr(\pi(f)).  \] Here the inner product $\ip{\;}{\;}$ is the one shown by Arthur to govern the structure of the packet $\Pi_{\psi}$, the character $\epsilon_\psi^G$ is defined in terms of symplectic root numbers, and $\tr(\pi(f))$ is the trace of $f$ on the representation $\pi$. 

The following simple observation drives our approach to limit multiplicity: both $\epsilon_\psi^G$ and $\ip{ }{\pi}$ take the value $1$ on the identity element of the group $\mS_\psi$. Since all elements in $\mS_\psi$ have order $2$, the group $H_s$ for which $s_\psi s$ is the identity will be $H_{s_\psi}:=H_\psi$. Given a function $f$ chosen to have non-negative trace on the $\pi \in \Pi_{\psi}$, the stable contribution of the group $H_\psi$ is the only entirely positive (and thus the largest) one. In short, up to the constants $N_{s,\psi}$, we have the inequality: \begin{equation} \label{mother inequality} \Idisci(f) \leq |\mS_\psi| \Sdisci^{H_\psi}(f^{H_\psi}).   \end{equation} When $G = U(2,1)$, this inequality appears in Theorem \ref{ineq}. 

\subsection{Applications to growth of cohomology}

We now discuss how to apply the inequality \eqref{mother inequality} to growth of cohomology. The classification of cohomological representations by Vogan-Zuckerman reduces the list of possible $\pi_{\infty}$ (and thus of possible $\psi_\infty$) under consideration to a finite number. We will thus only consider $\psi$ such that $\psi_\infty$ is associated to cohomological representations. For those parameters, the Arthur $SL_2$, and by extension the group element $s_\psi$, have been computed by Adams-Johnson \cite{AJ}. 

We wish to choose test functions that will compute the dimension of cohomology of arithmetic groups. By Matsushima's formula \cite{Ma67} \[\dim(H^1(Y(\fp^n), \BC)) = \sum_{\pi=\pi_{\infty}\pi_f}m(\pi)\dim(H^1(\fg,K;\pi_\infty))\dim(\pi_f)^{K_f(\fp^n)}, \] we need to pick $f(\fp^n) = f_\infty f_f(\fp^n)$ such that $f_\infty$ detects the representations with cohomology, and $f_f$ counts the number of $K(\fp^n)$-fixed vectors. 

Having already restricted our attention to a finite list of possible $\pi_\infty$, namely the ones appearing in the packets $\Pi_{\psi_\infty}$, we can use independence of characters to choose functions $f_\infty$ which detect only the representations we are interested in. At the finite places, we simply let $f_f(\fp^n)$ be the characteristic function of the appropriate open compact subgroup of $G(\BAF)$. 

A key result behind our method is the ability to control the transfer $f_f^{H_\psi}(\fp^n)$. Specifically, at the (unramified) place  $v_\fp$ corresponding to the prime $\fp$, the function $f_{v_\fp}(\fp^n)$ is the characteristic function of a congruence subgroup of full level $\fp^n$. By a result of Ferrari \cite{Fe}, the transfer $f^{H_\psi}_{v_\fp}(\fp^n)$ is, up to an explicit factor, the indicator function of the corresponding congruence subgroup of full level $\fp^n$ on $H_\psi$. Together with the fundamental lemma \cite{Ngo}, this allows us to interpret $\Sdisci^{H_\psi}(f^{H_\psi}(\fp^n))$ as counting level $K^H(\fp^n)$-fixed vectors.  See Theorem \ref{Ferrari} for a discussion of these questions when $G$ is a unitary group of rank three.

As for the bounds this method can provide, there are two extremes: 
\begin{itemize}
\item[(i)] When $H_\psi = G$, the method gives nothing since it bounds the growth of a distribution on $G$ in terms of another distribution on $G$. A special case of this, in which the growth is already known, are the representations contributing to $H^0$, as well as those with cohomology in the middle degree. 
\item[(ii)] When $H_\psi$ is a proper endoscopic group of $G$ and the parameter $\psi_{H_\psi}$ is stable for $H_\psi$, then $S^{H_\psi} = I^{H_\psi}$ and we may be able to inductively assume the growth to be known. In the particular case where the representations of $H_\psi$ corresponding to the parameter $\psi^{H_\psi}$ are the product of a discrete series representation with a character, we can use results of Savin \cite{Sa89} to bound multiplicity growth in terms of multiplicities of discrete series on smaller groups.  
\end{itemize}
We expect that this last situation is common enough for us to use our method to obtain bounds on growth of certain degrees of cohomology. More generally, this will provide bounds on limit multiplicities of certain non-tempered representations. 

\section{Rank $3$ unitary groups} \label{U(3)}
In the following sections,
we implement the preceding strategy in the case of $G$ such
that $G(\BR) = U(2,1)$.   Rather than using the general form of the
stabilized trace formula for unitary groups (which is the subject of
\cite{KMSW}), we use the results
of Rogawski \cite{R}, which have the same essential information,
although described in a somewhat more concrete (and perhaps more {\em ad hoc})
form.

The book \cite{R} doesn't use the language of Arthur parameters
that we used in the preceding section, although
its results can be interpreted that way.  In what follows,
after describing the basic framework in which we work,
we recall the various results from \cite{R} that mirror the 
pieces of the general strategy outlined in the preceding section.

\subsection{Number fields}Let $F$ be a totally real number
 field of degree $d$ and let $E/F$ be a CM extension.  
 Write $\mO_F$, $\mO_E$ to denote their rings of integers.   
 Denote the places of $F$ and $E$ by $v$ and $w$ 
 respectively, and the corresponding completions by
  $F_v$ and $E_w$. If $v$ is a place of $F$, let $E_v = E \ten_F F_v$. If $G$ is any algebraic group defined over $F$, we denote $G(F_v)$ by $G_v$. 

 Let $\BAF$ and $\BAE$ be the rings of ad\`eles of
  $F$ and $E$ and let $\norm: \BAE \to \BAF$ denote
   the norm map. Let $\BAFf$ denote the finite ad\`eles of $F$. If $G$ is an algebraic group over $F$ and $H$ is a subgroup of $G(\BAF)$, we use the notation $H_f = H \cap G(\BAFf)$. 
   
   Denote the id\`eles of norm $1$ by
    $\BAE^1$, and let $I_E^1 = E^1 \dom \BAE^1$.
     Fix a character $\mu$ of $\BAE^\times /E^\times$ 
     whose restriction to $\BAF^\times$ is the character 
     associated to $E$ by class field theory. Its restriction 
     to $E_w$ will be denoted $\mu_w$, or by $\mu$ when 
     the context is clear.  Let $\GalEF = \Gal(E/F)$ and denote
      its nontrivial element by $\conj$. We will also use the 
      notation $\conj(x) = \bar{x}$.

Fix the following finite subsets of places of $F$: \begin{itemize}
	\item $S_f$ is a set of finite places of $F$, containing the places 
	which ramify in $E$ as well as the places below those where $\mu$ is ramified.
	\item $S_\infty$ is the set of all infinite places of $F$. 
	\item Fix an infinite place $v_0$ and let $S_0 = S_\infty \setminus \{v_0\}$. 
	\item $S = S_f \cup S_\infty$.
\end{itemize}

\subsection{Unitary groups}
Temporarily, let $E/F$ be any quadratic extension, 
local or global. Let $\Phi_N$ be the matrix whose 
$ij\th$ entry is $(-1)^{i+1}\del_{i,N+1-j}$. The matrix 
$\Phi_N$ defines a hermitian form if $N$ is odd, and if 
$N$ is even and $\xi$ is an element of $E$ such that 
$\tr_{E/F}(\xi)=0$, the matrix $\xi\Phi_N$ defines a 
Hermitian form. Let $U(N)$ be the Unitary group of this 
hermitian form. It is a  quasi-split reductive group over 
$F$ whose group of $F$-points is \[ U(N,F) = \{ g \in 
GL_N(E) \mid g \Phi_N {^t\bar g} = \Phi_N\}. \]  Note 
that $U(N,E) \simeq GL_N(E)$. We have~$U(N)(F_v) \simeq GL_N(F_v)$ 
if $v$ splits in $E$. Otherwise $U(N,F_v)$ 
is a quasi-split unitary group. If $N$ is odd, the classification of hermitian forms by Landherr
\cite{Lan36} implies that 
it is the quasi-split unitary group up to isomorphism. 

\begin{remark}
	We follow Rogawski by denoting by $U(N,F)$ or $U(N)$ 
	the quasi-split group in $N$ variables over a field $F$. 
	The compact unitary group in $N$ variables 
	over $\BR$ will be denoted $U_N(\BR)$. 
\end{remark}
\subsection{The setup}\label{setup}

We now return to $E/F$ being a CM extension as 
before. Let $\fp$ be an ideal of $F$ such that the associated place $v_\fp$ is not in the set $S$ and \numequation \label{condition on p}
\fp \geq 9[F:\BQ]+1.
\end{equation}  We define the subgroups $U(N,\fp^n) \subset U(N, \BAFf)$ to be \[ U(N,\fp^n) := \{ g \in U(N, \hat\MO_F) \subset GL(N, \hat\MO_E) \mid g \equiv I_N \; (\fp^n \MO_E) \}. \] For any finite place $v$ of $F$, let $U(N,\fp^n)_v = U(N,\fp^n) \cap U(N)_v$. At the expense of possibly including additional primes in the set $S$, note that for all $v \notin S \cup \{v_\fp\}$, the subgroup $U(N,\fp^n)_v$ is a hyperspecial maximal compact subgroup of $U(N)_v$. 

Define $G$ to be the
 inner form of $U(3,F)$ defined with respect to a Hermitian inner product and determined by the condition that $G$
 is isomorphic to the compact group $U_3(\BR)$ at the
 infinite places contained in $S_0$, and to the quasisplit
 group $U(3,\BR)$ at the place $v_0$. 
 Since 3 is odd, it follows from Clozel's work in \cite{Clo91} that these conditions do uniquely determine the group
 $G$.  Moreover, for each 
 finite $v$, there are isomorphisms $\phi_v: G_v \to U(N)_v$, 
 canonical up to inner isomorphism. Note that if $F \neq \BQ$ 
 then
 $G$ is by construction anisotropic, an assumption that we make from now on.   We denote by $\fg$
 the complexified Lie algebra of $U(3,\BR)$. 

For each natural number $n$, we fix a compact subgroup $K(\fp^n) = \prod_v K_v(\fp^n)$ of $G(\BAF)$ 
as follows:  at all finite $v \notin S$, we
let $K_v(\fp^n) = \phi_v^{-1}(U(3,\fp^n)_v)$; at $v \in S_f$, 
the subgroup $K_v(\fp^n)$ is an arbitrary open compact 
subgroup independent of $n$; at the archimedean places 
$v \in S_0$, we let $K_v(\fp^n) = G_v$ while $K_{v_0}(\fp^n) \simeq U_2(\BR) \times U_1(\BR)$
is a maximal compact subgroup of $G_{v_0}$. For 
simplicity, we will sometimes use $K_v$ instead of 
$K_v(\fp^n)$ for $v \neq v_\fp$. 

We will denote by $H$ the quasisplit reductive group over $F$ given by the product~$U(2) \times U(1)$. We similarly fix compact subgroups $K^H(\fp^n) =  \prod_v K^H(\fp^n)_v$ with $K^H(\fp^n)_v \simeq U(2,\fp^n)_v \times U(1,\fp^n)_v$ open if $v$ is finite, $K^H(\fp^n)_v$ hyperspecial at $v \notin S \cup \{\fp^n\}$, and $K_v(\fp^n)^H$ a maximal compact subgroup of $H_v$. We again sometimes denote $K_v^H(\fp^n) = K_v^H$ when $v \neq v_\fp$.

\subsection{Test functions}
If $G/F$ is a unitary group, fix Haar measures 
$dg_v$ on the local factors $G_v$, normalized at 
the unramified places so that the groups $K_v(1)$ 
have volume 1. Let $dg =\ten_vdg_v$  be the 
induced measure on $G(\BAF)$.

Let $\omega_v$ be a character  
of the center $Z_v$ of $G_v$. We denote by $C^\infty_c(G_v, \omega_v)$
the space of smooth functions on $G_v$ which are compactly 
supported modulo the center and satisfy \[\tf_v(zg) = \omega^{-1}_v(z)\tf(g), \quad z \in Z_v, g \in G_v.\]
When $v$ is archimedean we additionally require that 
$\tf_v \in C^\infty_c(G_v, \omega_v)$ be $K_v$-finite. We will use the notation $C^\infty_c(G,w)$ for smooth, compactly supported functions on $G(\BAF)$
which are linear combinations of functions $f = \ten_v 
f_v$ with $f_v \in C^\infty_c(G_v, \omega_v)$, and  and 
such that $f_v$ is the characteristic function of $K_v(1)$
for almost all $v$. We will commonly refer to elements of $C^\infty_c(G_v, \omega_v)$ and $C^\infty_c(G,w)$ as local or global test functions.

If $\pi_v$ is an admissible representation of $G_v$ 
on a Hilbert space and $f_v$ is a test function, the 
operator \[ \pi_v(f_v) = \int_{G_v} \pi_v(g)f_v(g) dg_v  \] 
is of trace class, and we will denote its trace by $\tr \pi_v(f_v)$.
We will refer to the functional \[f_v \mapsto \tr \pi_v(f_v)\] as the character
of $\pi_v$. 
Note that if $\pi_v$ admits a central character (e.g.\ 
if $\pi_v$ is irreducible), then $\tr \pi_v$ will
vanish identically on test functions in 
$C^{\infty}(G_v,\omega_v)$ unless this central character is equal to~$\omega_v$.

\subsection{Transfer} \label{partoftheintroabouttransfer}
 The group $H$ appearing in the previous section is an
  endoscopic group of $G$. We work with a fixed endoscopic 
  datum $(H, s, \eta)$ but we will not explicitly make use of the specific $s$ and $\eta$, 
so we refer the reader to \cite[4.2]{R} for details about their construction.
     This endoscopic datum induces a notion of transfer
      of test functions, where a function $ f_v^H \in C^\infty_c(G_v, \omega_v\mu_v^{-1})$
      is said to be a transfer of a function 
      $f_v \in C^\infty_c(G_v, \omega_v)$ if the orbital integrals of
       $f$ and $f^H$ over corresponding conjugacy
        classes
are related by certain identities.

To even make sense of the notion of global transfer, one
has to know that the local transfer of the characteristic 
function of an hyperspecial maximal compact subgroup 
of $G$ is itself the characteristic function of a 
hyperspecial maximal compact subgroup of $H$.  This is 
the content of the  fundamental lemma, proved in the 
case of  $U(2,1)$ by Rogawski, for general unitary 
groups by Laumon-Ng\^o  \cite{LN}, and by Ng\^o \cite{Ngo} in general. We 
cite the version we will need, rephrased from that of 
Rogawski.

\begin{theorem}[\cite{R}, Proposition 4.9.1 (b)]
Let $F_v$ be p-adic, $E_v/F_v$ be unramified and suppose 
the characters $\mu_v$ and $\omega_v$ are unramified. 
Let $f$ be the characteristic function of the subgroup $K_v(1)$. Then the transfer $f^H$ can be chosen to be the characteristic function of $K^H_v(1)$. 
\end{theorem}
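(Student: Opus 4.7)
The plan is to verify this identity of orbital integrals by reducing everything to the spherical Hecke algebra, choosing the natural transfer coming from Satake, and then checking the matching of stable orbital integrals by explicit computation exploiting the low rank of $U(2,1)$.

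First I would observe that under the hypotheses (unramified $E_v/F_v$ and unramified $\mu_v, \omega_v$), both $K_v(1) \subset G_v$ and $K^H_v(1) \subset H_v$ are hyperspecial maximal compacts, so the characteristic functions $\mathbf{1}_{K_v(1)}$ and $\mathbf{1}_{K^H_v(1)}$ lie in the respective spherical Hecke algebras. The unramified endoscopic datum $(H,s,\eta)$, twisted by $\mu_v$, yields an unramified $L$-embedding ${}^LH \hookrightarrow {}^LG$, which via Satake induces a canonical algebra homomorphism $b \colon \mathcal{H}(G_v, K_v(1)) \to \mathcal{H}(H_v, K^H_v(1))$ sending unit to unit. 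Thus the \emph{candidate} for $f^H$ is $b(\mathbf{1}_{K_v(1)}) = \mathbf{1}_{K^H_v(1)}$, and the theorem amounts to the assertion that this Satake-theoretic transfer implements the geometric transfer of orbital integrals.

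Next I would reduce the claim to the matching, for every strongly $G$-regular semisimple $\gamma_H \in H_v$ with image $\gamma \in G_v$, of
\[
\Delta(\gamma_H, \gamma)\, SO^G_\gamma(\mathbf{1}_{K_v(1)}) \;=\; SO^H_{\gamma_H}(\mathbf{1}_{K^H_v(1)}),
\]
where $\Delta$ denotes the Langlands--Shelstad transfer factors; in the unramified setting these simplify drastically. Each stable orbital integral can then be rewritten as a count of lattices fixed by $\gamma$ (respectively $\gamma_H$) inside the Bruhat--Tits building of $G_v$, which in this case is a tree since $U(2,1)$ has relative rank one over the unramified extension. The tori of $G_v$ and $H_v$ fall into a short list of conjugacy types indexed by characteristic polynomials (split over $F_v$, split only over $E_v$, induced from an unramified cubic, etc.), and I would match them type by type, tracking the Galois cohomology that governs the passage between ordinary and stable conjugacy.

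The main obstacle is precisely this case-by-case matching: one must check that for each type of unramified torus the lattice count on the $G$-side equals the lattice count on the $H$-side after applying the explicit transfer factor. Because everything is unramified, each individual orbital integral collapses to a geometric-series-type sum in powers of the residue cardinality, but the combinatorics of which lattices in the tree are fixed by a given semisimple element, and how this is distributed across conjugacy classes in a stable class, is the nontrivial computation. This is exactly what Rogawski carries out in Chapter 4 of \cite{R}; in greater generality it is a special case of the Langlands--Shelstad fundamental lemma subsequently established by Laumon--Ng\^o \cite{LN} and Ng\^o \cite{Ngo}.
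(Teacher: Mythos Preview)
The paper does not prove this statement at all: it is quoted as Proposition~4.9.1(b) of \cite{R} and invoked as a black box (the surrounding text explicitly says ``We cite the version we will need''), so there is no in-paper argument to compare your proposal against.

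That said, your sketch is a fair outline of how Rogawski actually establishes the result in \cite{R}: set up the Satake transfer $b$ so that $b(\mathbf{1}_{K_v(1)}) = \mathbf{1}_{K^H_v(1)}$, and then verify the required stable orbital integral identities by a case analysis over the types of elliptic tori, exploiting that $U(2,1)$ has relative rank one so that the building is a tree and the orbital integrals are explicitly computable. Two small caveats. First, the matching you need is not quite $\Delta \cdot SO^G = SO^H$ but rather that the $\kappa$-orbital integral on $G$ equals $\Delta$ times the stable orbital integral on $H$; your displayed identity has the roles slightly garbled. Second, ``everything collapses to a geometric series'' understates the work: the genuinely delicate part in Rogawski's Chapter~4 is the precise bookkeeping of the transfer factors (including the $\Delta_{IV}$ term and the normalization tied to $\mu_v$) against the signs coming from the endoscopic element $s$, not the raw lattice counts. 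Your outline correctly identifies this as ``the nontrivial computation'' and defers to \cite{R}, \cite{LN}, \cite{Ngo}, which is exactly what the paper itself does.
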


Given the above theorem, if we have a function 
$f = \ten_v f_v$ on $G(\BAF)$, we will define its 
transfer to be $f = \ten_v f^H_v$ on $H(\BAF).$ Transfer of functions allows us to compare, and ultimately match up with each other, characters of representations on $G$ and on $H$. These relations are known as character identities, and the bulk of section \ref{identities} will consists in writing them out explicitly in the case of our groups $G$ and $H$. 

\subsection{Automorphic representations}
As discussed in the introduction, we will compute cohomology of arithmetic lattices in terms of automorphic representations, i.e. representations of $G(\BAF)$ appearing in the right-regular representation on~$ \Aut(G(F) \dom G(\BAF),\omega)$ for $\omega$ a fixed central character. 
Because $G$ is anisotropic, this representation constitutes the full automorphic spectrum once $\omega$ is fixed.
It decomposes as a direct sum 
\[ \Aut(G(F) \dom G(\BAF),\omega) = \bigoplus_{\pi} m(\pi)\pi \] over irreducible representations of $G(\BAF)$. Each $\pi$ decompose as a restricted tensor product
 $\pi = \ten'_v \pi_v$, where for each finite place $v$,
the tensor factor $\pi_v$ is an irreducible
admissible smooth representation of $G_v$. 
At almost all finite $v$, the factor $\pi_v$ is furthermore an unramified principal series representation and as such has a vector fixed under the maximal compact $K_v(1)$.  

\subsection{Adelic double quotients and Matsushima's formula} \label{adelic double quotients} Our objects of interest are the arithmetic groups $\Gamma(\fp^n)$, realized as the intersection \[\Gamma(\fp^n) = G(F) \cap \Kfpn \subset G(\BAFf). \] These lattices $\Gamma(\fp^n)$ are the fundamental groups of the compact manifolds
 \[ X(\fp^n) = \Gamma(\fp^n) \dom U(3,\BR)/K_\infty, \] where $K_\infty \simeq U_2(\BR) \times U_1(\BR)$  is 
 the maximal compact subgroup of $U(3,\BR)$. In practice, it will be more convenient to work with the 
 adelic double cosets \[ Y(\fp^n) = G(F)\dom G(\BAF)/\Kpn Z(\BAF)  \] where $Z(\BAF)$ is the center of $G(\BAF)$. 
 These manifolds $Y(\fp^n)$ consist of a finite disjoint 
 union of copies of $X(\fp^n)$. 
\begin{lemma}
	The size of the set of components $\pi_0(Y(\fp^n))$ is bounded by a constant which is independent of the exponent $n$. 
\end{lemma}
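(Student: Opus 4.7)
My plan is to identify $\pi_0(Y(\fp^n))$ with a finite adelic double coset, fiber it over the level-one version, and then apply strong approximation for $SU(3)$ to bound each fiber uniformly.

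First, because each $G_v$ with $v \in S_0$ is compact and $G_{v_0}/K_{v_0}Z(F_{v_0})$ is the connected complex hyperbolic symmetric space attached to $U(2,1)$, the archimedean factor $G(F_\infty)/K_\infty Z(F_\infty)$ is connected, while $G(\BAFf)/K_f(\fp^n)Z(\BAFf)$ is discrete. One therefore has $\pi_0(Y(\fp^n)) \cong G(F) \backslash G(\BAFf)/K_f(\fp^n)Z(\BAFf)$. The natural projection onto $G(F)\backslash G(\BAFf)/K_f(1)Z(\BAFf)$ is a surjection onto a finite set whose cardinality $h$ is independent of $n$, so it suffices to bound the size of each fiber by a constant independent of $n$. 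Setting $\Gamma_g = g^{-1}G(F)g \cap K_f(1)Z(\BAFf)$, the fiber above $[g]$ is in natural bijection with
\[
\Gamma_g \big\backslash K_f(1) \big/ K_f(\fp^n)\bigl(K_f(1)\cap Z(\BAFf)\bigr);
\]
since $K_f(\fp^n)$ agrees with $K_f(1)$ outside of $v_\fp$, this reduces to understanding the action of (the image of) $\Gamma_g$ on the finite group $U(3,\MO_{v_\fp})/U(3,\fp^n)\cdot U(1)(\MO_{v_\fp})$, where $U(1)$ is embedded as scalars.

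Next I would invoke strong approximation for $SU(3)$, which applies because $SU(3)$ is simply connected, semisimple, and noncompact at $v_0$ (where it is $SU(2,1)$). Density of $SU(3)(F)$, and hence of its $g$-conjugate, in $SU(3)(\BAFf)$ produces a subgroup of $\Gamma_g$ whose image in the displayed quotient contains all of $SU(3)(\MO_{v_\fp})/SU(3)(\fp^n)$. The remaining cokernel is then computed by the determinant: since $\det(zI_3) = z^3$, the image of $U(1)(\MO_{v_\fp})$ as scalars and the image of the level subgroup $U(3,\fp^n)$ together fill out $U(1)(\MO_{v_\fp})^3\cdot U(1)(1+\fp^n\MO_{E_{v_\fp}})$, so the cokernel is a quotient of
\[
U(1)(\MO_{v_\fp})\big/U(1)(\MO_{v_\fp})^3,
\]
a finite group whose order depends only on $v_\fp$. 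This gives a uniform bound on each fiber and hence on $|\pi_0(Y(\fp^n))|$.

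The main subtlety I anticipate is the careful bookkeeping needed to disentangle the center $Z = U(1)$, embedded as scalars, from the derived group $SU(3)$ inside $G$: one must check that the cubing map $z \mapsto z^3$ really does absorb the growing congruence structure on the determinant side so that only a fixed finite cube-class group survives, and that strong approximation applied to $g^{-1}SU(3)(F)g$ (rather than to $SU(3)(F)$ itself) still produces a subgroup that surjects onto the finite local congruence quotient.
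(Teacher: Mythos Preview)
Your argument is correct and rests on the same two ingredients as the paper's proof: strong approximation for the simply connected derived group $SU(3)$ (noncompact at $v_0$), and the fact that the center maps into the determinant target $U(1)$ by cubing, so that only the finite cube-class group $U(1)/U(1)^3$ survives. The organization, however, is genuinely different. The paper fibers $Y(\fp^n)$ \emph{globally} via the determinant map $\det: G \to U(1)$; strong approximation for $G^1 = \ker(\det)$ makes each fiber connected, so that $\pi_0(Y(\fp^n))$ is identified outright with the abelian double quotient $E^1\backslash \BAE^1/\det(Z(\BAF)K(\fp^n))$, and the bound follows immediately from $\det(Z(\BAF)) = (\BAE^1)^3$. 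You instead fiber over the level-one component set and then work \emph{locally} at $v_\fp$, using strong approximation to kill $SU(3)(\MO_{v_\fp})/SU(3)(\fp^n)$ inside each fiber before invoking the cube map. The paper's route is cleaner---one fibration, no choice of coset representatives $g$, and no need to track that conjugation by $g$ preserves density of $SU(3)(F)$---while yours has the minor advantage of making the local nature of the obstruction at $v_\fp$ more explicit. The bookkeeping subtlety you flag (that $g^{-1}SU(3)(F)g$ still surjects onto the local congruence quotient) is handled automatically in the paper's formulation, since there one never leaves the global determinant picture.
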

\begin{proof}
	We adapt an argument from \cite[\S 2]{MR15}. Considering $G$ as a subgroup of $GL_N/E$, let $\det: G \to U(1,E/F)$ be the determinant map and let $G^1 = \ker (\det)$. This map induces a fibering of $Y(\fp^n)$ over \[ U(1,F) \dom U(1,\BAF) / \det(Z(\BAF)K(\fp^n)).  \] The fibers are adelic double quotients for the group $G^1$, which is simply connected and has a noncompact factor at infinity. So by \cite[7.12]{PR94}, the group $G^1$ satisfies strong approximation with respect to the set $S_\infty$ and $G^1(F) $ is dense in $G^1(\BAFf)$, making the fibers connected. Thus we find that \[ \pi_0(Y(\fp^n)) \simeq U(1,F) \dom U(1,\BAF) / \det(Z(\BAF)K(\fp^n)) = E^1 \dom \BAE^1 / \det(Z(\BAF)K(\fp^n)). \] Now the image $\det (Z(\BAF)) $ is the subgroup $(\BAE^1)^3$ of $\BAE^1$. For each finite place $w$, the factor corresponding to $E_w$ in the quotient $\BAE^1/(\BAE^1)^3$ is a finite set. It follows that by increasing the level in powers of a single prime $\fp$, one can only produce a bounded number of components. 
\end{proof}

We note that automorphic representations appearing in $\Aut(G(F) \dom G(\BAF),1)$, i.e. the representations with trivial central character, can be identified with functions on the quotient $G(F) \dom G(\BAF) /Z(\BA)$. It is those representations which contribute to $H^1(Y(\fp^n),\BC) $. 

By Matsushima's formula \cite{Ma67}, the cohomology of $Y(\fp^n)$ can be computed 
as \[ \dim(H^1(Y(\fp^n), \BC)) = \sum_{\pi=\pi_{\infty}\pi_f}m(\pi)\dim(H^1(\fg,K;\pi_\infty))\dim(\pi_f)^{K_f(\fp^n)}. \] The sum is taken over $ \Aut(G(F)G(\BAF),1)$; it vanishes for almost all $\pi$. We denote by $H^1(\fg,K;\pi_\infty)$ the $(\fg,K)$-cohomology 
of the Harish-Chandra module of $K$-finite smooth vectors in $\pi_\infty$.

\section{Representations with cohomology and endoscopic character identities} \label{identities}
In this section, we first recall the explicit description of 
representations with nontrivial $(\fg,K)$ cohomology
from~\cite{R}.  We then spell out characters identities 
for all local factors which make up the global 
representations with cohomology. The introduction 
of these identities is justified in the latter part of the 
section, where we recall that automorphic
 representations with first cohomology all arise as the 
 transfer of automorphic characters from $H$.

\begin{remark}
A key conceptual point (which however is not made
explicit in the results of~\cite{R}) is that representations 
of $U(2,1)$ with first cohomology correspond
to an Arthur parameter $\psi_{\infty}$ in which the
 $SL_2$ factor maps to the principal $SL_2$ of a 
 Levi subgroup of the form $GL_2 \times GL_1$ 
 in $\hat{G}$.  This means that, if $\psi$
is a global parameter which is has nonzero first cohomology at infinity,
then {\em all} the local factors $\psi_v$ are non-trivial on
the Arthur $\SL_2$, so that the corresponding local factors
$\pi_v$ of members of the packet $\Pi(\psi)$ lie in a non-tempered
local $A$-packet --- assuming the yoga of Arthur parameters and
$A$-packets is correct!  This concrete consequence of
that yoga {\em is} proved in~\cite{R}, and we recall it in
Theorem~\ref{thm:global constraint} below.
\end{remark}

\subsection{Representations with cohomology are the transfers of characters.}
We first recall the results on the representations of $U(3,\BR)$ 
with non-vanishing first $(\fg,K)$ cohomology. We will say that 
an irreducible admissible representation $\pi$ has 
non-vanishing $(\fg,K)$ cohomology if its underlying 
Harish-Chandra module of $K$-finite smooth vectors does. 

\begin{proposition}[\cite{BW}, Proposition 4.11]
	There are two isomorphism classes of representations $\pi$ of $U(3,\BR)$ such that $H^1(\fg,K;\pi) \neq 0$. In both cases we have  $$H^1(\fg,K;\pi) \simeq \BC.$$
\end{proposition}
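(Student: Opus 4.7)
The plan is to invoke the Vogan--Zuckerman classification, which identifies the irreducible unitary $(\fg,K)$-modules with non-vanishing $(\fg,K)$-cohomology (with trivial coefficients) as the cohomologically induced modules $A_\fq = A_\fq(0)$, parameterized up to $K$-conjugacy by $\theta$-stable parabolic subalgebras $\fq = \fl + \fu$ of $\fg_\BC$. The key facts are that the cohomology of $A_\fq$ is concentrated from degree $R_\fq := \dim_\BC(\fu \cap \fp_\BC)$ upward, and that in that range it is given by
\[
H^{R_\fq + j}(\fg,K;A_\fq) \;\simeq\; \Hom_{L \cap K}\bigl(\wedge^j(\fl \cap \fp_\BC),\,\BC\bigr),
\]
where $\fp_\BC$ is the complexified $(-1)$-eigenspace of the Cartan involution $\theta$. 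Thus the proposition reduces to (i) classifying $K$-conjugacy classes of $\theta$-stable parabolics with $R_\fq = 1$, and (ii) checking that the right-hand side above equals $\BC$ at $j = 0$ in each such case.

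For step (i), I would fix a compact maximal torus $T \subset K = U(2) \times U(1)$, identify the roots of $\fg_\BC$ in coordinates $e_1,e_2,e_3$, and note that the compact roots are $\pm(e_1 - e_2)$ while the non-compact roots are $\pm(e_i - e_3)$ for $i = 1,2$. Every $\theta$-stable parabolic arises from an element $X = \mathrm{diag}(a_1,a_2,a_3) \in i\ft$: the Levi $\fl$ is the zero-weight space and $\fu$ the sum of strict positive weight spaces. Since any positive system contains exactly two non-compact positive roots, $R_\fq = 1$ forces the three eigenvalues $a_i$ to split as ``two equal, one separate'', with $a_3$ lying in the pair of equal values. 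A direct enumeration produces four such $X$, which collapse under the Weyl group $W_K = \BZ/2\BZ$ (swapping the indices $1$ and $2$) to exactly two $K$-conjugacy classes. Their Levis are $\fl$ with $L \cap K \simeq U(1) \times U(1) \times U(1)$, but with two of the three non-compact root spaces absorbed into $\fl$, so that $\fl \cap \fp_\BC$ is two-dimensional, spanned by a single pair of opposite non-compact root vectors.

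For step (ii), I specialize the Vogan--Zuckerman formula at $j = 0$ to obtain
\[
H^1(\fg,K;A_\fq) \;\simeq\; \Hom_{L \cap K}(\BC,\BC) \;=\; \BC
\]
for each of the two parabolics, yielding exactly the two isomorphism classes of representations claimed, each with $H^1 \simeq \BC$.

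The main obstacle here is nothing deep but rather bookkeeping: one must carefully verify the $K$-conjugacy identifications among the four candidate parabolics, and confirm the compatibility of the Vogan--Zuckerman formula with our conventions for $\theta$ and for the complex structure on $\fp$. Once the combinatorial enumeration is settled, everything else is a direct citation of the general theory.
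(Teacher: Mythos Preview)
The paper does not prove this proposition at all: it is quoted directly from Borel--Wallach as an input, with no argument supplied. Your Vogan--Zuckerman sketch is therefore not competing with any proof in the paper, and it is a correct and standard way to recover the statement. The enumeration of $\theta$-stable parabolics with $R_\fq=1$ is right: the four choices of $X$ with $a_3$ equal to exactly one of $a_1,a_2$ collapse under $W_K\simeq\BZ/2\BZ$ to two classes, and for each the formula $H^{R_\fq}(\fg,K;A_\fq)\simeq\Hom_{L\cap K}(\BC,\BC)=\BC$ gives the asserted one-dimensionality. The two resulting modules are visibly non-isomorphic (their $\fu\cap\fp_\BC$ lie in $\fp^+$ and $\fp^-$ respectively, so they are complex conjugates with distinct lowest $K$-types).

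One small gap worth closing explicitly: to conclude that \emph{only} these two representations have $H^1\neq 0$, you must also dispose of the case $R_\fq=0$. For $U(2,1)$ the only $\theta$-stable parabolic with $\fu\cap\fp_\BC=0$ is $\fq=\fg$ itself (any nonzero $\fu$ necessarily contains a non-compact root space, as one checks from the root data), so $A_\fq$ is the trivial representation; and $H^1(\fg,K;\BC)=\Hom_K(\fp_\BC,\BC)=0$ since $\fp^{\pm}$ are irreducible nontrivial $K$-modules. With that sentence added, your argument is complete.
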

Before fixing notation for the two representations
 with nonzero first cohomology, we recall a few facts from \cite{R}. To each representation with cohomology (in any degree), Rogawski attaches a triple of integers encoding its Langlands parameter. For representations with $H^1 \neq 0$, it follows from \cite[Section 12.3]{R} that the two triples should respectively be \[ (0,1,-1), \quad \mathrm{and} \quad (1,-1,0). \] These triples are also associated to the parameters of one-dimensional representations of $U(2,\BR) \times U(1,\BR)$, which we will denote $\xi^+ = \xi(0,1,-1)$ and~$\xi^- = \xi(1,-1,0)$. As discussed in \ref{partoftheintroabouttransfer}, the restriction of $\xi^\pm$ to the center of $\lambda$ should be equal to $\mu^{-1}$ (by abuse of notation, we denote by $\mu$  the local character at the infinite place.) By definition of $\mu$, there should be an integer $t$ such that for $z \in \BC^\times$, \[\mu(z) = \left(z/|z|\right)^{2t+1}.\]

Let $\det$ denote the determinant on $U(2)$ and $\lambda$ be the identity embedding of $U(1)$ in $\BC^\times$. With the above normalization, the two characters are \[ \xi^+= \det^{t-1}\ten\lambda, \quad \xi^- = \det^{-t}\ten\lambda^{-1}. \]
Denote $\Xi := \{\xi^+, \xi^-\}.$ We have introduced these characters of $H$ because they satisfy character identities relating them to the representations with nonzero cohomology. Still following \cite[Section 12.3]{R}, we introduce the following notation. Note that the superscript $n$ stands for non-tempered. 
\begin{definition} \label{cohomologicalreps}
	For $\xi \in \Xi$, let $\pi^n(\xi)$ be the representation of $U(3,\BR)$ with non-vanishing first cohomology associated to $\xi$ via the corresponding triple. 
\end{definition}

To each of the two triples is attached a unitary character $\chi^\pm$ of the diagonal Levi subgroup of
$U(2,1)$, and the representation $\pi^n(\xi^\pm)$ appears in the Jordan-Holder
decomposition of the induction $i_G(\chi^\pm)$. 
Finally, to each $\xi$ we associate a discrete series representation which we denote $\pi^s(\xi)$ as in \cite[Section 12.3]{R}. The representation $\pi^s(\xi^\pm)$ is a Jordan-Holder constituent of the representation $i_G(\chi^\mp)$. Although \cite{R} does not 
state it in this language, the representation $\pi^s(\xi)$ is the second member of the Arthur
 packet $\Pi(\xi)$. The elements of $\Pi(\xi)$ satisfy the following character identities.

\begin{proposition}[\cite{R}, Prop. 12.3.3] \label{noncompact char identity}
	Let $\xi \in \Xi$, $\pi^n(\xi)$, and $\pi^s(\xi)$ be as above and let $f$ be a compactly supported smooth function with transfer $f^H$. Then \begin{itemize}
		\item[(a)] $\tr(\xi(f^H)) = \tr(\pi^n(\xi)(f))+\tr(\pi^s(\xi)(f));$
		\item[(b)] $\tr(\pi^n(\xi)(f))-\tr(\pi^s(\xi)(f))$ defines a stable distribution on $G$.
	\end{itemize}
\end{proposition}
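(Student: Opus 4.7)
The plan is to derive both identities from the realization of $\pi^n(\xi^\pm)$ and $\pi^s(\xi^\mp)$ as Jordan-Hölder constituents of a single principal series $i_G(\chi^\pm)$ of $U(2,1)$, combined with the endoscopic matching of orbital integrals on a torus shared by $G$ and $H$.

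First I would compute $\tr \xi^\pm(f^H)$ directly. Since $H(\BR) = U(2,\BR) \times U(1,\BR)$ is compact modulo center and $\xi^\pm$ is one-dimensional, Weyl integration rewrites $\tr \xi^\pm(f^H)$ as an integral over a torus $T_H \subset H$ of the character $\xi^\pm$ paired with the stable orbital integrals of $f^H$. In parallel, the induced character formula expresses $\tr i_G(\chi^\pm)(f)$ as the analogous integral over the diagonal torus $T \subset G$ of $\chi^\pm$ against the orbital integrals of $f$. Since $T \cong T_H$ and $\chi^\pm$ is the restriction of $\xi^\pm$, the endoscopic matching relating the orbital integrals of $f$ and $f^H$ (with its explicit transfer factor) converts one integral into the other, producing an identity
\[ \tr i_G(\chi^\pm)(f) = \tr \xi^\pm(f^H) + (\text{contributions of finite-dimensional constituents}). \]
Using the explicit Jordan-Hölder decomposition of $i_G(\chi^\pm)$, which contains $\pi^n(\xi^\pm)$, $\pi^s(\xi^\mp)$, and finite-dimensional summands whose characters match those of corresponding one-dimensional constituents appearing in inductions on $H$, careful cancellation across the two inductions $i_G(\chi^+)$ and $i_G(\chi^-)$ yields identity (a).

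For part (b), I would proceed directly on regular semisimple elements. By Harish-Chandra's character formula, the trace of the discrete series $\pi^s(\xi)$ on a regular semisimple $\gamma \in G(\BR)$ is an explicit alternating sum indexed by a maximal compact torus. The $L$-packet of $\pi^s(\xi)$ contains a second discrete series whose character differs from that of $\pi^s(\xi)$ by a controlled sign as $\gamma$ varies within a stable class; the character of the Langlands quotient $\pi^n(\xi)$, computed via the subquotient theorem from the same principal series, exhibits the same sign behavior under a change of rational conjugacy class within a stable class. Stability of $\tr \pi^n(\xi)(f) - \tr \pi^s(\xi)(f)$ then reduces to checking that these two sign patterns coincide, a direct verification using the Langlands classification at the archimedean place.

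The main obstacle is the careful bookkeeping of signs and transfer factors: one must verify that the Langlands--Shelstad sign $\Delta(\gamma_H, \gamma)$ on matching orbital integrals is exactly compensated by the sign distinguishing the two members of the $L$-packet of $\pi^s(\xi)$, and that the central-character compatibility (namely that $f^H$ transforms under $\omega \mu^{-1}$ given that $f$ transforms under $\omega$) is respected throughout. Establishing the Jordan-Hölder composition series of $i_G(\chi^\pm)$ at the real place, while a routine piece of real Langlands classification, also requires care to identify each irreducible subquotient with the correct normalization.
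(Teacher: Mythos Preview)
The paper does not prove this proposition at all: it is quoted verbatim as \cite[Prop.~12.3.3]{R} and used as a black box, so there is no ``paper's own proof'' to compare against. Your proposal therefore goes well beyond what the present paper does.

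As a sketch of Rogawski's argument, your outline is in the right spirit (reduce to character computations on tori via the induced-character formula and the endoscopic matching of orbital integrals, then identify the constituents of the reducible principal series), but one concrete point is off. You assert that $H(\BR) = U(2,\BR) \times U(1,\BR)$ is ``compact modulo center''; in the paper's conventions $H$ is the \emph{quasi-split} group, so at the place $v_0$ one has $H_{v_0} \simeq U(1,1) \times U(1)$, which is not compact. Consequently the Weyl integration formula on $H(\BR)$ does not reduce to a single torus integral: one must sum over both the split and the elliptic Cartan subgroups of $U(1,1)$, and the matching with orbital integrals on $G = U(2,1)$ involves the full Shelstad theory for real groups rather than the short argument you sketch. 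The cancellation of the ``finite-dimensional constituents'' you invoke is also more delicate than suggested, since the relevant principal series for $U(2,1)$ at this particular parameter has length two (consisting exactly of $\pi^n$ and $\pi^s$), and the identity~(a) is really the Shelstad transfer identity for the $A$-packet rather than a consequence of cancelling extra pieces across two inductions. The stability in~(b) is then a consequence of Shelstad's general results on stable characters for real groups, specialized to this packet; your plan to verify sign patterns by hand would work but is not how Rogawski organizes it.
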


This is the archimedean version of the local transfer 
result which will allow us to compare the trace of functions on $G$ with that of their transfers on $H$. 

\subsection{Character identities at the compact archimedean places}  If $\xi \in \Xi$ is as above, and if the group $G$ at the corresponding infinite place is isomorphic to the compact $U_3(\BR)$, let $\Pi(\xi) := \{\bf{1}\}$, the set containing only the trivial representation of $U_3(\BR)$.
These packets satisfy the following character identities. 
\begin{proposition}[Prop. 14.4.2 (c), \cite{R}] \label {compact char identity}
	Let $\xi \in \Xi$ and let $f$ be a test function on $U_3(\BR)$. Let $f^H$ be its transfer to $U(2,\BR) \times U(1,\BR)$. Then \[\tr {\bf 1}(f) = -\tr \xi(f^H) . \]  
\end{proposition}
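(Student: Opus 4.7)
The plan is to deduce the identity $\tr \mathbf{1}(f) = -\tr \xi(f^H)$ at the compact archimedean place from the corresponding identity at the quasi-split place (Proposition~\ref{noncompact char identity}) via a global trace-formula comparison, so that the elusive sign can be read off a known global identity rather than extracted from explicit transfer factors by hand.

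Concretely, I would build an auxiliary global unitary group $G'/F'$, with $F'/\mathbf{Q}$ totally real of degree at least $2$ and $E'/F'$ a CM extension, that is quasi-split at one archimedean place $v_0$, compact (i.e.\ $U_3(\mathbf{R})$) at another archimedean place $v_1$, and quasi-split at all finite places. Fix a global character $\xi'$ of $H'(\mathbf{A}_{F'})$ whose archimedean components at $v_0$ and $v_1$ are both equal to the given $\xi$. Then choose a global test function $\tilde f = \tilde f_{v_0}\,\tilde f_{v_1}\,\tilde f^{\infty}$ with $\tilde f_{v_1} = f$, with $\tilde f_{v_0}$ a pseudo-coefficient detecting exactly the packet $\{\pi^n(\xi),\pi^s(\xi)\}$ (so that its character at $v_0$ is completely known via Proposition~\ref{noncompact char identity}), and with $\tilde f^{\infty}$ chosen so that the global parameter $\xi'$ contributes non-trivially and can be isolated by linear independence of characters at some auxiliary unramified finite place. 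Running the $\xi'$-isotypic stable trace formula \eqref{mother inequality} on $G'$, the only unknown among the local factors of the resulting global character identity is the value of the character of $\mathbf{1}$ against $f$ at $v_1$: Proposition~\ref{noncompact char identity} handles $v_0$, the fundamental lemma handles unramified finite places, and standard Shelstad--Kottwitz matching handles the remaining finite places. Solving for the $v_1$-contribution then forces the claimed identity.

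The main obstacle is sign-accounting. What the argument really computes is the global product of local pairings $\prod_v \langle s_\psi,\pi_v\rangle$, weighted by the global $\epsilon_\psi^{G'}(s_\psi)$ and the Kottwitz sign of the compact inner form at $v_1$; the claim that this product forces a $-1$ at $v_1$ must be verified by hand using the explicit values of $\langle s_\psi,\pi^n(\xi)\rangle$ and $\langle s_\psi,\pi^s(\xi)\rangle$ coming from Proposition~\ref{noncompact char identity}. A more hands-on alternative, which avoids globalization at the cost of carrying explicit transfer factors, is a direct orbital-integral computation via the Weyl integration formula on $U_3(\mathbf{R})$ and on $U(2)\times U(1)$: both groups share a common maximal torus $T\cong U(1)^3$ with Weyl groups $S_3$ and $S_2$, and the ratio of Weyl denominators $|D_G(t)/D_H(t)|^2$ together with the Kottwitz--Shelstad transfer factor $\Delta(t_H,t_G)$ on the elliptic torus should yield the minus sign after a few lines of bookkeeping. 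I would attempt the global route first, precisely because it localizes the delicate sign to a single place where it can be extracted from the already-established non-compact identity, rather than recomputed from first principles.
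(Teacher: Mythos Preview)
The paper does not supply its own proof of this proposition: it is quoted verbatim as Proposition~14.4.2(c) of Rogawski's book \cite{R}, with no argument given in the present manuscript. So there is nothing here to compare your proposal against at the level of this paper.

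That said, your globalization sketch is close in spirit to the method Rogawski himself uses in Chapter~14 of \cite{R}: the local character identities at the compact real places, including the sign, are not computed from explicit transfer factors but are extracted from a global comparison of trace formulas after the identities at the quasi-split places and at the finite places have been established. Your caveat about sign-accounting is exactly the delicate point in that argument; in Rogawski's treatment the bookkeeping is absorbed into the global constant he calls $c$ (which the present paper recalls below equation~\eqref{open the bowels and see the character identities} and which turns out to equal $(-1)^N$). Your alternative direct route via the Weyl integration formula on $U_3(\mathbf{R})$ is also viable in principle, since everything is explicit on a compact group, but it requires writing down Shelstad's archimedean transfer factors by hand and keeping track of their normalization, which is precisely what the globalization argument is designed to avoid. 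Either route would produce a proof; neither is carried out in the paper you are reading.
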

We now fix some notation which will allow us to treat 
the archimedean places all at once when we discuss 
global phenomena. In short, we will reserve the subscript
 $``\infty"$ for the product of all infinite places. We will denote characters of
 $$U(2,F\ten \BR) \times U(1,F \ten \BR) \simeq [U(2,\BR) \times U(1,\BR)]^d$$ 
 by $\xi_\infty$, and we will let $\Xi_\infty$ denote the set of 
 representations of $[U(2,\BR) \times U(1,\BR)]^d$ such that 
 each local factor is of the form $\xi^\pm_{v}$.

 Given a character $\xi_{v_0}$ of $H_{v_0} = U(2,\BR) \times U(1,\BR)$,
  we will denote by $\pi^n_\infty (\xi_{v_0})$ the representation
   of $G(F \ten \BR) \simeq U(3,\BR) \times U_3(\BR)^{d-1}$
    given by \[ \pi^n_\infty(\xi_{v_0}) := \pi^n(\xi_{v_0}) \ten {\bf1}^{d-1}.\] 

\subsection{Character identities at the non-archimedean places} In this subsection, we construct local packets $\Pi(\xi)$ at the non-archimedean places and recall the character identities they satisfy. 
\subsubsection{Nonsplit places}

Let $E/F$ be a quadratic extension of $p$-adic fields obtained as a localization of our global CM extension and let $G = U(3)$ and $H = U(2) \times U(1)$ be the quasisplit unitary groups of the specified dimensions defined relative to $E/F$. 

Let $\xi$ be a one-dimensional representation of $H$ with central character $\mu^{-1}$. 
Associated to $\xi$ is are two representations with trivial central character: the non-tempered representation $\pi^n(\xi)$, constructed as a quotient of a principal series representation, and the supercuspidal representation $\pi^s(\xi)$.  We will have no use for their explicit description, but we refer the reader to \cite[12.2]{R} and \cite[13.1]{R} for more details about $\pi^n(\xi)$ and $\pi^s(\xi)$ respectively. 
As in the archimedean case, they form an Arthur packet: \[ \Pi(\xi) = \{\pi^n(\xi), \pi^s(\xi)\}   \] which satisfies the following character identity.
\begin{proposition}[Proposition 13.1.4, \cite{R}] \label{padic char identity}
	Let $f \in C^\infty_c(G)$ and let $f^H$ be its transfer. Let $\xi$ be a smooth character of $H$. Then $$\xi(f^H) = \tr \pi^n(\xi)(f) + \tr \pi^s(\xi)(f).$$
\end{proposition}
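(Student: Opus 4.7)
The strategy is to deduce the identity by relating $\pi^n(\xi)$ and $\pi^s(\xi)$ through a reducible principal series of $G$ that has them as Jordan--Hölder constituents, and then to compare orbital integrals through the definition of the endoscopic transfer. The proposition is the $p$-adic analogue of Proposition \ref{noncompact char identity}(a), so I would try to mimic the archimedean proof at the level of characters on the regular semisimple set.

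First, I would promote $\xi$ to a character $\chi$ of the diagonal Borel $B \subset G$ using the endoscopic $L$-embedding $\widehat H \hookrightarrow \widehat G$. Following \cite{R}, the normalized parabolic induction $i_G(\chi)$ is reducible of length two: its non-tempered Langlands quotient is $\pi^n(\xi)$, while the remaining constituent is a tempered piece $\pi^t$. A closely related induced representation produces the supercuspidal $\pi^s(\xi)$, and on the level of stable distributions $\pi^t$ and $\pi^s(\xi)$ differ by a character supported on the non-endoscopic (i.e., non-transferred) tori of $G$.

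Second, I would compute both sides of the identity via the Weyl integration formula. On the $G$ side, $\tr(i_G(\chi)(f))$ expands as a weighted sum of orbital integrals of $f$ over the conjugacy classes of maximal tori of $G$, the weight on each torus being a $W$-symmetrized value of $\chi$. On the $H$ side, since $\xi$ is one-dimensional, $\xi(f^H) = \int_H f^H(h)\xi(h)\,dh$ similarly decomposes as orbital integrals of $f^H$ over tori of $H$ weighted by $\xi$. The defining property of the transfer $f \mapsto f^H$ (matching orbital integrals via Langlands--Shelstad transfer factors on corresponding regular semisimple classes, with the fundamental lemma handling the unramified pieces) would then identify the $H$-contribution with the endoscopic part of the $G$-contribution.

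Third, I would extract the identity for individual packet members using the Grothendieck-group relation $[i_G(\chi)] = [\pi^n(\xi)] + [\pi^t]$ together with the auxiliary stable identity between $\pi^t$ and $\pi^s(\xi)$; the stable piece is exactly what is needed to replace the tempered constituent of the induced representation by the supercuspidal packet mate, leaving precisely $\xi(f^H) = \tr \pi^n(\xi)(f) + \tr \pi^s(\xi)(f)$. The main obstacle is this intermediate identity relating $\pi^t$ to $\pi^s(\xi)$: it requires explicit knowledge of the supercuspidal character on elliptic tori and careful bookkeeping of transfer factors. This is the technical heart of the argument, and establishing it cleanly usually relies on an independent construction of $\pi^s(\xi)$ (for instance via the local theta correspondence or by matching characters through a simple form of the global trace formula) rather than on purely local representation theory.
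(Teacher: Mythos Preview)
The paper does not prove this proposition: it is quoted verbatim as Proposition~13.1.4 of \cite{R} and used as a black box, with no argument given. There is therefore nothing in the paper to compare your proposal against.

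For what it is worth, your sketch is in the right spirit for how Rogawski actually establishes this in \cite{R}: the identity is indeed obtained by analyzing the constituents of a reducible principal series and matching orbital integrals via the Weyl integration formula, with the delicate point being the explicit character computation for the supercuspidal $\pi^s(\xi)$ on elliptic tori. But none of that is reproduced in the present paper, which simply imports the result.
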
 

\subsubsection{Split places.}
We now describe character identities when $F$ is a 
local field such that $G = GL_3(F)$ and $H = GL_2(F) 
\times GL_1(F)$, being thought of as the Levi subgroup 
of a standard parabolic in $G$. In this case the transfer 
of one-dimensional representations can be described 
explicitly.
\begin{proposition}[Proposition 4.13.1 (b), \cite{R}] \label{split char identity}
	Let $\xi$ be a one-dimensional representation of $H$, and let $i_G(\xi)$ be the unitary parabolic induction of $\xi \ten \mu \circ \det$. Then the following holds for any test function $f$ on $G$: \[ \tr(i_G(\xi))(f) = \tr(\xi(f^H)). \] 
\end{proposition}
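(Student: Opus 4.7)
The plan is to exploit the fact that at a split place, the endoscopic group $H = GL_2(F) \times GL_1(F)$ is literally the Levi factor of the standard upper-triangular parabolic $P = MN$ in $G = GL_3(F)$, so that endoscopic transfer reduces to constant-term/parabolic descent of test functions. In particular, at split places the transfer factors are essentially trivial, and up to the twist by $\mu \circ \det$ dictated by the embedding of $L$-groups, the transfer $f \mapsto f^H$ is given by the normalized constant term
\[ f^{(P)}(m) = \delta_P(m)^{1/2} \int_N f(mn)\, dn, \]
multiplied by $\mu(\det m)$ to account for the difference in central characters $\omega$ vs.\ $\omega \mu^{-1}$.

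The second ingredient is the standard character formula for parabolic induction (van Dijk's formula): for any admissible representation $\sigma$ of $M$ and any test function $f$ on $G$,
\[ \tr\bigl(i_P^G(\sigma)(f)\bigr) = \tr\bigl(\sigma(f^{(P)})\bigr). \]
Applied to $\sigma = \xi \otimes \mu\circ\det$, this gives
\[ \tr\bigl(i_G(\xi)(f)\bigr) = \tr\bigl((\xi \otimes \mu\circ\det)(f^{(P)})\bigr) = \tr\bigl(\xi(\mu\circ\det \cdot f^{(P)})\bigr). \]
Identifying $\mu\circ\det \cdot f^{(P)}$ with $f^H$ under the chosen normalization of transfer then yields the claimed identity.

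Concretely, I would proceed in three steps. First, record the explicit form of the split endoscopic datum $(H,s,\eta)$ from \cite{R}, and verify that its transfer map at split places is exactly the twisted constant term above; this amounts to noting that split semisimple conjugacy classes in $H$ correspond to those in $G$ through the Levi embedding, and that the transfer factor is constant. Second, invoke van Dijk's character formula for $i_P^G$. Third, put the two together, checking compatibility of central characters: $i_G(\xi)$ is required to have central character $\omega$, while $\xi$ has central character $\omega\mu^{-1}$, and the twist by $\mu\circ\det$ reconciles this.

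The main subtlety, and the only place where something could go wrong, is the precise normalization of the transfer factor at the split place, since the $L$-group embedding $\eta:{}^LH \hookrightarrow {}^LG$ uses the character $\mu$ and this character has to appear in the right place. Once this bookkeeping is done correctly, the identity is a formal consequence of van Dijk's formula and the fact that the endoscopic transfer at a split place is, up to the $\mu$-twist, nothing other than parabolic descent to the Levi.
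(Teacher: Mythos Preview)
Your proposal is correct and matches the approach indicated by the paper. The paper does not give its own proof of this proposition; it simply cites Rogawski's Proposition~4.13.1(b) and remarks that ``the proposition is not entirely proved in \cite{R}, and the missing elements can be found in \cite{va72},'' the latter being precisely van Dijk's character formula for parabolic induction that you invoke. Your sketch --- that at a split place the endoscopic transfer is, up to the $\mu\circ\det$ twist coming from the $L$-embedding, the normalized constant term along the parabolic with Levi $H$, and that van Dijk's formula then converts this into the trace of the induced representation --- is exactly how one assembles Rogawski's setup with van Dijk's result to obtain the identity.
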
The proposition is not entirely proved in \cite{R}, and the missing elements can be found in \cite{va72}. Since it is induced from the group $GL_2 \times GL_1$, the representation $i_G(\xi)$ is irreducible \cite{BZ77}.  In this situation, we define $\Pi(\xi) := \{i_G(\xi)\}.$
\subsection{Global Constraints}
Here, we justify the introduction of the local
character identities by recalling the result of \cite{R} which states 
that representations which are of the form $\pi^n(\xi_v)$ 
at one place must be globally the transfer of a 
one-dimensional representation. 

Now working globally, let  $\xi$ be a one-dimensional 
representation appearing in $L^2_{\rm disc}(H(F)\dom 
H(\BAF))$ and write $\xi = \ten_v' \xi_v$. In the above, 
we have described an Arthur packet $\Pi(\xi_v)$ for 
each $\xi_v$. Let \[\Pi(\xi) = \{ \ten'_v \pi_v \mid \pi \in \Pi(\xi_v), \; \pi = \pi^n(\xi_v) \text{ for almost all }v.\}\]
The following result provides constraints on which global representations can have non-vanishing first cohomology at the infinite place. 
\begin{theorem}[\cite{R}, 13.3.6 (c)]
	\label{thm:global constraint}
	If $\pi$ is a discrete automorphic representation of 
	$G$ such that $\pi_v$ is of the form $\pi^n(\xi_v)$ 
	for some place $v$ of $F$ which does not split in 
	$E$, then $\pi \in \Pi(\xi)$ for some one-dimensional 
	$\xi \in \Pi(H)$.
\end{theorem}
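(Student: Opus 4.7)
The plan is to combine the stable trace formula comparison between $G$ and its endoscopic group $H$ with the local character identity at the non-split place $v$, exploiting the non-temperedness of $\pi_v = \pi^n(\xi_v)$ to force $\pi$ to lie in the image of endoscopic transfer from a one-dimensional representation of $H$.

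First, I would produce a global test function $f = \prod_w f_w$ designed to isolate $\pi$. At the place $v$, I would choose $f_v$ by linear independence of characters so that its trace distinguishes members of the packet $\Pi(\xi_v) = \{\pi^n(\xi_v),\pi^s(\xi_v)\}$ from all other irreducible admissible representations of $G_v$. At the remaining places I would take $f_w$ so that $\tr \pi'(f)$ is supported on automorphic representations $\pi'$ whose local factors agree with those of $\pi$ up to the rigidity we are trying to establish (using pseudo-coefficients archimedeanly, and matrix coefficients of unramified representations at almost all finite places, with suitably chosen bi-$K_w$-finite functions at the ramified finite places).

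Next, I would apply the stabilization of the discrete trace formula: $I^G_{\mathrm{disc}}(f) = S^G_{\mathrm{disc}}(f) + \kappa\, S^H_{\mathrm{disc}}(f^H)$ for some explicit constant, together with the character identity of Proposition \ref{padic char identity} at $v$. Because $\pi^n(\xi_v)$ and $\pi^s(\xi_v)$ occur with opposite signs in the distribution $\tr \pi^n(\xi_v)(f_v) - \tr\pi^s(\xi_v)(f_v)$ (which, by the $p$-adic analogue of Proposition \ref{noncompact char identity}(b), is stable and hence cancels against a similar contribution on the stable side), the contribution of $\pi$ to $I^G_{\mathrm{disc}}(f)$ must be matched by a contribution from $H$ of the form $\tr \xi(f^H)$ for some automorphic representation $\xi$ of $H(\mathbb{A}_F)$; moreover the shape of the character identity forces $\xi_v$ to be one-dimensional.

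Finally, I would run a rigidity argument to upgrade ``$\xi_v$ is one-dimensional'' to ``$\xi$ is globally one-dimensional and $\pi \in \Pi(\xi)$''. Unramified local Langlands identifies $\xi_w$ with the data recovered from $\pi_w$ at almost all places, and strong multiplicity one on $H = U(2)\times U(1)$ then pins down $\xi$ globally; comparing with Proposition \ref{split char identity} at split places and Proposition \ref{padic char identity} at nonsplit places shows $\pi_w \in \Pi(\xi_w)$ for every $w$, and the almost-everywhere condition $\pi_w = \pi^n(\xi_w)$ is automatic because $\pi^n(\xi_w)$ is the unramified member of the packet. The main obstacle I expect is precisely this rigidity step: ruling out a priori that $\pi$ could be paired with a different global $\xi$ at places away from $v$, or that the endoscopic contribution could come from a higher-dimensional automorphic representation of $H$ that happens to mimic a one-dimensional character at $v$; handling this requires careful use of strong multiplicity one for $H$ together with the fact that a non-tempered local component at a single nonsplit place is already strong enough to determine the global Arthur parameter.
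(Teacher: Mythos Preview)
The paper does not contain a proof of this theorem: it is simply quoted from Rogawski's monograph as \cite[Theorem 13.3.6(c)]{R}, with no argument supplied. So there is no ``paper's own proof'' to compare your proposal against.

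That said, your outline is in the right spirit but is more of a heuristic than a proof. In Rogawski's book the statement is not obtained by an isolated trace-formula argument of the kind you sketch; it falls out of the full endoscopic classification of the discrete spectrum of $G$, which is built up over Chapters~11--14 by simultaneously comparing the trace formula for $G$ with that for $H$ \emph{and} with the twisted trace formula for $\mathrm{Res}_{E/F}\,GL_3$ (base change). The base-change comparison is what pins down the global parameter of $\pi$ and rules out the possibility you flag as the ``main obstacle'': that the endoscopic contribution might come from a higher-dimensional automorphic representation of $H$ mimicking a character at $v$. Your appeal to strong multiplicity one for $H$ alone would not close this gap, since $U(2)$ itself has endoscopy and non-tempered spectrum; Rogawski instead classifies the image of $\pi$ under base change to $GL_3(\mathbf{A}_E)$, where the non-temperedness of $\pi_v$ forces the base-change lift to be an isobaric sum of Hecke characters, and then descends. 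If you want to reconstruct the argument rather than cite it, you will need that extra comparison.
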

\begin{corollary} \label{global coho reps come from characters}
If $\pi = \ten'_v \pi_v$ is a discrete automorphic representation of $G$ such that at the non-compact infinite place $v_0$, the representation $\pi_{v_0}$ satisfies $H^1(\fg,K; \pi_{v_0}) \neq 0$, then $\pi \in \Pi(\xi)$ for some global character $\xi$ with $\xi_{\infty} \in \Xi_\infty$. 
\end{corollary}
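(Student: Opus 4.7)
The plan is to reduce the corollary to Theorem~\ref{thm:global constraint} by verifying its hypothesis at $v_0$ using the classification of cohomological representations of $U(3,\BR)$, and then to check that the resulting global character has its archimedean components in $\Xi$.

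By the proposition of Borel--Wallach stated above and Definition~\ref{cohomologicalreps}, the only two representations of $U(3,\BR)$ with $H^1(\fg,K;\pi)\neq 0$ are $\pi^n(\xi^+)$ and $\pi^n(\xi^-)$, so the hypothesis at $v_0$ gives $\pi_{v_0}\cong\pi^n(\xi_{v_0})$ for some $\xi_{v_0}\in\Xi$. Since $F$ is totally real and $E/F$ is CM, the real place $v_0$ satisfies $E_{v_0}=\BC$ and hence does not split in $E$; Theorem~\ref{thm:global constraint} therefore produces a one-dimensional automorphic character $\xi$ of $H$ such that $\pi\in\Pi(\xi)$.

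It remains to check $\xi_\infty\in\Xi_\infty$. At $v_0$: by the definition of $\Pi(\xi)$ we have $\pi^n(\xi_{v_0})=\pi_{v_0}\in\Pi(\xi|_{v_0})=\{\pi^n(\xi|_{v_0}),\pi^s(\xi|_{v_0})\}$, and since the packets $\Pi(\xi^+)$ and $\Pi(\xi^-)$ have distinct infinitesimal characters this forces $\xi|_{v_0}=\xi_{v_0}\in\Xi$. At each compact place $v\in S_0$: the local factor $\pi_v$ is an irreducible representation of $U_3(\BR)$ and must lie in $\Pi(\xi|_v)$, but the compact $A$-packets recalled before Proposition~\ref{compact char identity} are built so that $\Pi(\xi|_v)=\{\mathbf{1}\}$ precisely when $\xi|_v\in\Xi$; combined with the algebraicity of a one-dimensional automorphic character of $H$ on the compact torus $H_\infty$ and the constraint already imposed at $v_0$, this forces $\xi|_v\in\Xi$ at every $v\in S_0$.

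The only non-routine step is the last one: one must argue that a one-dimensional automorphic character of $H$ whose $v_0$-component lies in $\Xi$ necessarily has all its archimedean components in $\Xi$, and that the Adams--Johnson compact $A$-packet associated with such a character contains the trivial representation of $U_3(\BR)$. Both facts use the explicit form $\xi^{+}=\det^{t-1}\otimes\lambda$, $\xi^{-}=\det^{-t}\otimes\lambda^{-1}$ together with Galois compatibility at infinity imposed by automorphy; the rest is a direct appeal to the Borel--Wallach classification and to Theorem~\ref{thm:global constraint}.
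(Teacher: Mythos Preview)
Your reduction to Theorem~\ref{thm:global constraint} is exactly right and is what the paper intends: the paper states the corollary with no proof, treating it as immediate. The observation that $v_0$ is nonsplit because $E/F$ is CM, together with the Borel--Wallach classification forcing $\pi_{v_0}\cong\pi^n(\xi_{v_0})$ for some $\xi_{v_0}\in\Xi$, is the correct way to invoke the theorem. Your verification that $\xi|_{v_0}\in\Xi$ is also essentially correct, though the cleanest phrasing is that $\pi^n(\xi_{v_0})$ is non-tempered, hence must equal $\pi^n(\xi|_{v_0})$ inside $\Pi(\xi|_{v_0})$, and the assignment $\xi_v\mapsto\pi^n(\xi_v)$ is injective.

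The genuine gap is at the compact places $v\in S_0$. Your appeal to ``Galois compatibility at infinity imposed by automorphy'' does not do what you want: an automorphic character of $H$ has archimedean components that are independent across the real places, subject only to the global central-character condition, and that condition does not force each $\xi_v$ into the two-element set $\Xi$. Nothing in the hypothesis of the corollary (which is \emph{only} at $v_0$) constrains $\pi_v$ for $v\in S_0$, so you cannot conclude $\xi_v\in\Xi$ there from the stated assumptions alone. The paper itself is loose on this point; indeed, in the only place the corollary is used (the proof of Proposition~\ref{transtf}) the representations under consideration additionally satisfy $\pi_v=\mathbf{1}$ for all $v\in S_0$, and the paper invokes Proposition~\ref{compact char identity} alongside the corollary precisely to handle the compact places. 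With that extra hypothesis the argument is immediate: the compact-place packets are singletons depending injectively on $\xi_v$, so $\mathbf{1}\in\Pi(\xi_v)$ forces $\xi_v\in\Xi$. You should either add that hypothesis to your statement or note explicitly that the conclusion $\xi_\infty\in\Xi_\infty$ at compact places requires it.
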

We now have established all the necessary local character identities, as well as the fact that representations with first cohomology occur in global Arthur packets~$\Pi(\xi)$. 
\section{Choice of test functions} \label{tf} 
We now explain which test functions we will use to compute growth of cohomology of lattices in $U(3,\BR)$ using the trace formula. If $K_f$ is any compact open subgroup of $G(\BAFf)$, let $\Gamma = K_f \cap G(F)$, and let $X_\Gamma = G(F) \dom G(\BAF) / K_f K_\infty Z(\BA)$.  Recall that $v_0$ is the infinite place at which $G$ is noncompact.  By ``Matsushima's formula'',
as recalled above, and the discussion leading to Definition \ref{cohomologicalreps}, we have 
\nummultline
\label{Matsushima}
\dim(H^1(X_\Gamma, \BC)) = \sum_{\pi}m(\pi)\dim(H^1(\fg,K;\pi_{v_0}))\dim(\pi_f)^{K_f} \\ = \sum_{\xi_{v_0} \in \Xi_{v_0}} \sum_{\pi_{\infty} = \pi^n_\infty(\xi_{v_0})} m(\pi)\dim(\pi_f)^{K_f} 
\end{multline}

We wish to compute, or at least estimate, the right hand side 
of this formula using the trace formula applied to an appropriate test function depending on the subgroup $K_f$. 

We will specialize $K_f$ to be $\Kfpn = K(\fp^n) \cap G(\BAFf)$. We explain how the last sum of \eqref{Matsushima} can be realized as the trace of a global test function $\tf$ on a subspace of $\Aut(G(F)\dom G(\BAF))$ and we discuss how this relates to Arthur's stable trace formula. The goal of this subsection is the proof of the following statement is to construct a global test function $\tf(n) =\tf_\infty \tf_f(n)$ whose trace computes the dimension of cohomology, i.e. that satisfies \[ \sum_{\xi_{v_0} \in \Xi_{v_0}} \sum_{ \pi_\infty = \pi_\infty^n(\xi_{v_0})} m(\pi)\dim(\pi_f)^{\Kfpn} = \sum_{\xi : \xi_\infty \in\Xi_\infty} \sum_{\pi \in \Pi(\xi)} m(\pi) \tr \pi(\tf(n)). \]

The remainder of this subsection is devoted to
constructing the various factors of the desired test function $f(n)$.

\subsubsection{Archimedean places} We first discuss the function $f_\infty$ on $G(F_\infty)$. As above will let $F_\infty$ be the product of all the infinite completions. Recall that we are denoting by $v_0$ the unique infinite place at which $G$ is non-compact, and by $S_0$ the set consisting of all infinite $v \neq v_0$.  Then the group $G$ so that $G_{v_0} \simeq U(3, \BR)$ and $G_v = U_3(\BR)$ is compact for $ v \in S_0$. 

We will construct a function $\tf_\infty = \prod_{v \mid \infty} f_v$. For $v \in S$, we let $f_v$ be the constant function \[ f_v := \frac{1}{\Vol( U_3(\BR))}. \]  In contrast, the choice of $f_{v_0}$ is not constructive.  Recall that we will be taking a trace over packets of the form $\Pi(\xi^\pm_{v_0})$, and that we only wish to count the contribution of the non-tempered representations $\pi^n(\xi^\pm_{v_0})$. Thus the smooth,  compactly supported function $f_{v_0}$ must satisfy \[ \tr \pi^n(\xi^\pm_{v_0})(f_{v_0}) = 1, \quad \tr\pi^s(\xi^\pm_{v_0})(f_{v_0}) = 0. \] Such an $f_{v_0}$ is guaranteed to exist by the linear independence of characters, as proved by Harish-Chandra and which we recall here. 
\begin{theorem}[Theorem 6, \cite{HC}]
	Let $\pi_1,...,\pi_n$ be a finite set of quasi-simple irreducible representations of G on the Hilbert spaces $H_{\pi_1},...,H_{\pi_n}$ respectively. Suppose none of them are infinitesimally equivalent. Then their characters $ \tr {\pi_1},..., \tr {\pi_n}$ are linearly independent.
\end{theorem}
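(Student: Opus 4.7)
The plan is to argue by contradiction. Suppose that there exist scalars $c_1, \dots, c_n$, not all zero, with
\[
\sum_{i=1}^n c_i \tr \pi_i(f) = 0 \qquad \text{for every } f \in C_c^\infty(G),
\]
and construct, for some $j$ with $c_j \neq 0$, a test function on which $\pi_j$ is visible while every other $\pi_i$ is invisible.

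The first step is to separate the $\pi_i$ by infinitesimal character. Since each $\pi_i$ is quasi-simple, the center $\mathfrak{Z}(U(\fg_{\BC}))$ of the universal enveloping algebra acts on its smooth vectors by a scalar character $\chi_i$. For $z \in \mathfrak{Z}$, let $z \star f$ denote $f$ differentiated by the corresponding left-invariant operator; then $\pi_i(z \star f) = \chi_i(z)\,\pi_i(f)$, so $\tr \pi_i(z \star f) = \chi_i(z)\,\tr \pi_i(f)$. Substituting $z^k \star f$ into the identity for $k = 0, 1, 2, \dots$ and running a Vandermonde argument with an element $z$ whose values $\chi_i(z)$ separate the distinct infinitesimal character classes, I reduce to the case where all $\pi_i$ share a common infinitesimal character.

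Next, fix an irreducible $K$-type $\tau$, let $e_\tau \in C_c^\infty(K)$ be the associated matrix-coefficient projector, and restrict the vanishing identity to test functions of the form $f = e_\tau * f' * e_\tau$ with $f' \in C_c^\infty(G)$. Each $\pi_i(f)$ is then supported on the $\tau$-isotypic subspace $H_{\pi_i}^\tau$, which is finite-dimensional by admissibility, and $\tr \pi_i(f)$ becomes the trace of an element of $\End(H_{\pi_i}^\tau)$ acting through the spherical Hecke algebra $\mathcal{H}(G,K,\tau) := e_\tau C_c^\infty(G) e_\tau$. Since the $\pi_i$ are pairwise non-isomorphic irreducible admissible representations with a common infinitesimal character, the modules $H_{\pi_i}^\tau$ (whenever nonzero) are pairwise non-isomorphic simple $\mathcal{H}(G,K,\tau)$-modules, and the Jacobson density theorem produces $f'$ with $\pi_j(f)$ a prescribed nonzero operator on $H_{\pi_j}^\tau$ while $\pi_i(f) = 0$ for $i \neq j$ with $H_{\pi_i}^\tau \neq 0$. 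This forces $c_j = 0$.

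The main obstacle is that a single $K$-type $\tau$ need not detect every $\pi_i$, since some $H_{\pi_i}^\tau$ may vanish. This is handled by iteration: admissibility guarantees that for each $\pi_i$ there is at least one $K$-type $\tau_i$ with $H_{\pi_i}^{\tau_i} \neq 0$, so running the previous argument once per $\tau \in \{\tau_1, \dots, \tau_n\}$ eliminates the coefficients one at a time and yields the desired contradiction.
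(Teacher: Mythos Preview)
The paper does not prove this statement: it is quoted as Theorem~6 of \cite{HC} and used as a black box to manufacture the archimedean test function $f_{v_0}$. So there is no proof in the paper to compare against.

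Your outline is essentially Harish-Chandra's own strategy and is correct. Two nontrivial inputs you are silently assuming deserve mention. First, you use that each $\pi_i$ is admissible (finite $K$-multiplicities); this is itself a deep theorem of Harish-Chandra for quasi-simple irreducible Banach representations of a real reductive group, and is logically prior to the linear-independence result. Second, the assertions that each nonzero $H_{\pi_i}^\tau$ is a \emph{simple} $\mathcal{H}(G,K,\tau)$-module and that infinitesimally inequivalent $\pi_i$ yield non-isomorphic such modules are standard but not immediate; once granted, Jacobson density over $\BC$ on finite-dimensional simple modules gives surjectivity onto $\prod_i \End_\BC(H_{\pi_i}^\tau)$, exactly as you say.

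One minor simplification: your ``main obstacle'' paragraph is unnecessary. If $H_{\pi_i}^\tau = 0$ then $\tr\pi_i(e_\tau * f' * e_\tau) = 0$ automatically, so those terms already vanish in the dependence relation. Having fixed a single $j$ with $c_j \neq 0$ and chosen any $\tau$ occurring in $\pi_j$, the density step immediately gives $c_j = 0$ and the contradiction; no iteration over $K$-types is required.
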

The above characters are defined as distributions on the space of compactly supported smooth functions. This allows us to infer the following. 
\begin{corollary} \label{see infinite}
	There is a smooth function $f_{v_0}$ on $U(3,\BR)$ with the property that for  $\xi^\pm_{v_0}$ we have \[ \tr \pi^n(\xi^\pm_{v_0})(f_{v_0}) = 1, \quad \tr \pi^s(\xi^\pm_{v_0})(f_{v_0}) = 0. \]
\end{corollary}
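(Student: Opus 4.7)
The plan is to apply Harish-Chandra's linear independence of characters (the theorem recalled immediately above) to the four irreducible admissible representations
\[
\pi^n(\xi^+_{v_0}),\ \pi^s(\xi^+_{v_0}),\ \pi^n(\xi^-_{v_0}),\ \pi^s(\xi^-_{v_0})
\]
of $U(3,\BR)$, all of which have trivial central character, and then translate the resulting linear independence into a surjectivity statement on traces.

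First I would verify that these four representations are pairwise not infinitesimally equivalent. Within a single Arthur packet $\Pi(\xi^{\pm}_{v_0})$, the non-tempered Langlands quotient $\pi^n(\xi^\pm)$ and the discrete series $\pi^s(\xi^\pm)$ have manifestly distinct Harish-Chandra modules (one is a proper quotient of an induced principal series, the other is square-integrable), so they are not infinitesimally equivalent. Across the two packets, the representations are pinned down by the distinct Langlands parameter triples $(0,1,-1)$ and $(1,-1,0)$ recorded in \cite[\S 12.3]{R}, so that $\pi^n(\xi^+_{v_0}) \not\simeq \pi^n(\xi^-_{v_0})$ and $\pi^s(\xi^+_{v_0}) \not\simeq \pi^s(\xi^-_{v_0})$ as $(\fg,K)$-modules. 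Harish-Chandra's theorem then delivers the linear independence of the four trace distributions on $C^\infty_c(U(3,\BR))$.

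To conclude, I invoke the standard fact that linear independence of $N$ linear functionals on a complex vector space $V$ is equivalent to the surjectivity of the joint evaluation map $V \to \BC^N$. Applied to the four trace functionals above, this produces a smooth compactly supported function $f_{v_0}$ on $U(3,\BR)$ whose combined trace vector $\bigl(\tr \pi^n(\xi^+)(f_{v_0}),\, \tr \pi^s(\xi^+)(f_{v_0}),\, \tr \pi^n(\xi^-)(f_{v_0}),\, \tr \pi^s(\xi^-)(f_{v_0})\bigr)$ equals $(1,0,1,0)$, which is exactly the desired statement. The only non-mechanical step is the pairwise inequivalence verification, and even that amounts to reading off the Langlands parameter data from \cite[\S 12.3]{R}; the rest is a direct consequence of the cited theorem.
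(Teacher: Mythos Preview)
Your proposal is correct and takes essentially the same approach as the paper: the paper simply states the corollary as an immediate consequence of Harish-Chandra's linear independence theorem, and you have filled in the details (pairwise inequivalence of the four representations, and the passage from linear independence of functionals to surjectivity of the joint trace map). One small remark: your inequivalence check explicitly covers only four of the six pairs, but the remaining two cross-pairs $\pi^n(\xi^\pm)$ versus $\pi^s(\xi^\mp)$ are covered by the same temperedness argument you already gave.
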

Accordingly, we define the function $f_\infty$ as \[ \tf_\infty = \prod_{v \mid \infty} \tf_{v} \in C^\infty_c(G(F_\infty),1).  \]  By construction and the structure of the packets $\Pi(\xi_v)$ for $v \mid \infty$ described in section \ref{identities}, we have:
\begin{lemma} \label{see infinite but all of them}
	Let $\pi_\infty = \prod_{v \in S_\infty} \pi_v$  with $\pi_v \in \Pi(\xi_v^\pm)$.  Then \[ \tr \pi_{\infty}(f_\infty) = \begin{cases}
	1 & \text{ if }\pi_\infty = \pi^n_\infty(\xi_{v_0}) \\ 
	0 & \text { otherwise. }
	\end{cases} \]
\end{lemma}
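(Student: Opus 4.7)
The proof is essentially a factorization argument, since both the test function $f_\infty$ and the representation $\pi_\infty$ are by construction pure tensors over the archimedean places. My plan is to exploit multiplicativity of the trace, handle the compact factors by a direct volume computation, and invoke Corollary~\ref{see infinite} to pin down the contribution at~$v_0$.

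First I would observe that, because $f_\infty = \prod_{v \mid \infty} f_v$ and $\pi_\infty = \bigotimes_{v \mid \infty} \pi_v$, the operator $\pi_\infty(f_\infty)$ factors as a tensor product of the local operators $\pi_v(f_v)$, and hence
\[
\tr \pi_\infty(f_\infty) \;=\; \prod_{v \mid \infty} \tr \pi_v(f_v).
\]
Thus it suffices to evaluate each local factor separately.

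Next, for $v \in S_0$, the definition of the archimedean packet at a compact place (preceding Proposition~\ref{compact char identity}) forces $\Pi(\xi_v^\pm) = \{\mathbf{1}\}$, so $\pi_v$ is the trivial representation of the compact group $U_3(\BR)$. Since $f_v$ was chosen to be the constant function $1/\Vol(U_3(\BR))$, the operator $\pi_v(f_v) = \int_{U_3(\BR)} \mathbf{1}(g)\, f_v(g)\, dg_v$ is the identity on the one-dimensional space, and we get $\tr \pi_v(f_v) = 1$ for every $v \in S_0$ regardless of the choice of $\xi_v^\pm$.

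Finally, at $v_0$ we have $\pi_{v_0} \in \Pi(\xi_{v_0}^\pm) = \{\pi^n(\xi_{v_0}^\pm), \pi^s(\xi_{v_0}^\pm)\}$, and Corollary~\ref{see infinite} gives $\tr \pi^n(\xi_{v_0}^\pm)(f_{v_0}) = 1$ and $\tr \pi^s(\xi_{v_0}^\pm)(f_{v_0}) = 0$. Combining, the product $\prod_{v \mid \infty}\tr \pi_v(f_v)$ is $1$ precisely when $\pi_{v_0} = \pi^n(\xi_{v_0}^\pm)$ (and $\pi_v = \mathbf{1}$ for all $v \in S_0$, which is automatic), i.e.\ when $\pi_\infty = \pi^n_\infty(\xi_{v_0})$, and is $0$ otherwise. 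No step presents a real obstacle; the whole content is in the careful choice of $f_{v_0}$ via linear independence of characters, which has already been handled in Corollary~\ref{see infinite}.
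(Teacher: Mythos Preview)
Your proof is correct and matches the paper's approach: the paper states the lemma as an immediate consequence of the construction of $f_\infty$ and the structure of the local packets $\Pi(\xi_v)$ at archimedean places, which is exactly the factorization argument you spell out. You have simply made explicit the two ingredients the paper leaves implicit, namely the volume normalization at the compact places and the invocation of Corollary~\ref{see infinite} at~$v_0$.
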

So far we have produced test functions detecting the representations with nontrivial first cohomology at the noncompact place among those belonging to the packets $\Pi(\xi_\infty)$. 

\subsubsection{Non-archimedean places.}\label{test function at the non-archimedean places}  
We want to produce a function which counts the dimension of the space of vectors fixed by the subgroup \[\Kpn_f = K_{v_\fp}(\fp^n) \times \prod _{v \neq v_\fp \text{ finite}} K_v, \] introduced in \ref{setup}. 
We first consider the non-archimedean places $v \neq v_\fp$.  For each of these, we let $$\tf_v := \frac{1_{K_v}}{\text{Vol}(K_v)},$$ be the indicator function of $K_v$, scaled so that $$ \tr \pi_v (f_v) = \dim \pi_v^{K_v}. $$ Note that this dimension will be equal to $1$ in the cases where $K_v=K_v(1)$ and $\pi_v$ is an unramified principal series. 

As for the place $v=v_\fp$, at level $\fp^n$ we use the scaled indicator function of the compact subgroup of the corresponding level: $$f_{v_\fp}(n) := \frac{1_{\Kpn}}{\text{Vol}(\Kpn)}. $$
We recall the following property of these various $f_v$.

\begin{lemma} \label{see finite} 
	Let $\pi_f = \prod_v \pi_v$ be a representation of $G(\BAFf)$ such that $\pi_v$ is an unramified principal series representation for almost all $v$, and let $$\tf_f(n)  = \tf_{v_\fp}(n) \cdot \prod_{v \neq v_\fp} \tf_v.$$ Then  \[ \tr \pi_f(\tf_f(n)) = \dim\left(\pi_f\right)^{\Kpn}.   \]
\end{lemma}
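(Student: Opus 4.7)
The plan is to reduce the statement to the standard fact that, for an admissible smooth representation of a totally disconnected group, the trace of the averaged characteristic function of a compact open subgroup equals the dimension of the subspace of fixed vectors.

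First I would handle the local factor at each place $v$ separately. For a compact open subgroup $K \subset G_v$, the operator
\[
\pi_v\!\left(\frac{\mathbf{1}_K}{\Vol(K)}\right) = \frac{1}{\Vol(K)}\int_K \pi_v(k)\,dk
\]
is the idempotent projecting the space of $\pi_v$ onto $\pi_v^{K}$, since it acts as the identity on $K$-fixed vectors and annihilates any $K$-isotypic component corresponding to a nontrivial character. By admissibility of $\pi_v$, the image is finite-dimensional, so its trace equals $\dim \pi_v^{K}$. Applying this at $v=v_\fp$ with $K=K_{v_\fp}(\fp^n)$ gives $\tr\pi_{v_\fp}(f_{v_\fp}(n)) = \dim \pi_{v_\fp}^{K_{v_\fp}(\fp^n)}$, and applying it at the remaining finite places with $K=K_v$ gives $\tr\pi_v(f_v) = \dim \pi_v^{K_v}$.

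Next I would put the local computations together using the restricted tensor product structure. Since $\pi_f = \bigotimes'_v \pi_v$ with spherical vectors $v_v^\circ \in \pi_v^{K_v(1)}$ for almost all $v$, and since $K_f(\fp^n) = \prod_v K_v(\fp^n)$ where $K_v(\fp^n)=K_v$ for $v\neq v_\fp$, the subspace of $K_f(\fp^n)$-fixed vectors factors as
\[
\pi_f^{K_f(\fp^n)} \;\simeq\; \pi_{v_\fp}^{K_{v_\fp}(\fp^n)} \otimes \bigotimes_{v\neq v_\fp}' \pi_v^{K_v}.
\]
For almost all $v$ the factor $\pi_v^{K_v(1)}$ is one-dimensional (spanned by $v_v^\circ$), so the tensor product is essentially finite, and consequently
\[
\dim \pi_f^{K_f(\fp^n)} \;=\; \dim \pi_{v_\fp}^{K_{v_\fp}(\fp^n)} \cdot \prod_{v\neq v_\fp} \dim \pi_v^{K_v}.
\]
Since $f_f(n)=\bigotimes_v f_v$ (with $f_v$ the scaled indicator of $K_v(1)$ for almost every $v$, so $\pi_v(f_v)$ acts as the identity on $v_v^\circ$), the operator $\pi_f(f_f(n))$ factors compatibly, and its trace is the product of the local traces computed above.

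Combining the two steps yields the desired identity. There is essentially no obstacle here: the only point requiring mild care is the convergence/factorization of the global trace across the restricted tensor product, and this is handled as usual by the observation that for almost all $v$ the local scaled indicator $f_v$ acts as the identity on the spherical line, so only finitely many local factors contribute nontrivially.
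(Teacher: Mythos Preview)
Your proof is correct and is precisely the standard argument one would give. The paper itself does not supply a proof of this lemma: it is introduced with ``We recall the following property of these various $f_v$'' and left unproved, so your write-up is exactly the routine justification the paper is taking for granted.
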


We collect our local constructions at all places and define the global test function $\tf(n)$ as the product $\tf(n) = \tf_\infty\cdot \tf_f(n)$. 

 \begin{proposition}\label{transtf}
	The global test function $\tf(n)$ defined as above satisfies \[ \sum_{\xi_{v_0} \in \Xi_{v_0}} \sum_{ \pi_\infty = \pi_\infty^n(\xi_{v_0})} m(\pi)\dim(\pi_f)^{\Kfpn} = \sum_{\xi : \xi_\infty \in\Xi_\infty} \sum_{\pi \in \Pi(\xi)} m(\pi) \tr \pi(\tf(n)). \]
\end{proposition}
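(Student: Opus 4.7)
The plan is to expand the right-hand side using the factorization $f(n) = f_\infty \cdot f_f(n)$: for any $\pi = \pi_\infty \otimes \pi_f$ in some packet $\Pi(\xi)$, the trace factors as $\tr \pi(f(n)) = \tr \pi_\infty(f_\infty) \cdot \tr \pi_f(f_f(n))$. Because $\xi_\infty \in \Xi_\infty$, each archimedean local factor $\pi_v$ of $\pi_\infty$ lies in $\Pi(\xi_v^\pm)$, so Lemma \ref{see infinite but all of them} applies and evaluates the archimedean trace to $1$ when $\pi_\infty = \pi_\infty^n(\xi_{v_0})$ (where $\xi_{v_0}$ denotes the $v_0$-component of $\xi_\infty$) and to $0$ otherwise. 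Lemma \ref{see finite} evaluates the finite-place trace as $\dim(\pi_f)^{K_f(\fp^n)}$. Combining these, the right-hand side collapses to
\[
\sum_{\xi \,:\, \xi_\infty \in \Xi_\infty} \ \sum_{\substack{\pi \in \Pi(\xi) \\ \pi_\infty = \pi_\infty^n(\xi_{v_0})}} m(\pi)\, \dim(\pi_f)^{K_f(\fp^n)}.
\]

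Next I will match the index sets of the two sums. At each compact archimedean place $v \in S_0$, the local packet $\Pi(\xi_v)$ consists only of the trivial representation, so the condition $\pi_\infty = \pi_\infty^n(\xi_{v_0})$ on $\pi \in \Pi(\xi)$ reduces to the single condition $\pi_{v_0} = \pi^n(\xi_{v_0})$ at the noncompact place, which is precisely the archimedean condition appearing on the left-hand side. One direction of the desired bijection is immediate: any $\pi$ appearing in the reduced right-hand side is a discrete automorphic representation with $\pi_\infty = \pi_\infty^n(\xi_{v_0})$. For the reverse direction, I invoke Corollary \ref{global coho reps come from characters}, which guarantees that every discrete automorphic $\pi$ with $H^1(\fg,K;\pi_{v_0}) \neq 0$, equivalently with $\pi_{v_0} = \pi^n(\xi_{v_0})$ for some $\xi_{v_0} \in \Xi_{v_0}$, belongs to $\Pi(\xi)$ for some global $\xi$ with $\xi_\infty \in \Xi_\infty$.

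The main subtlety is to ensure each such $\pi$ arises from a \emph{unique} global $\xi$, so that no representation is double-counted on the right. This should follow from the structure of the local packets at nonarchimedean places recalled in Propositions \ref{padic char identity} and \ref{split char identity}: at each place the character $\xi_v$ can be recovered from $\pi_v$ as a local invariant, which pins down $\xi$ globally via its local components. Granting this, the above bijection identifies the two sums term by term, proving the equality. The only non-formal ingredient is Corollary \ref{global coho reps come from characters}, which rests on Rogawski's classification; everything else is a bookkeeping consequence of the factorization of $f(n)$ and the two test-function lemmas already established.
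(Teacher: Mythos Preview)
Your argument follows essentially the same route as the paper's own proof: factor $f(n)=f_\infty f_f(n)$, use Lemma~\ref{see infinite but all of them} and Lemma~\ref{see finite} to evaluate the two pieces, and invoke Corollary~\ref{global coho reps come from characters} to show that every cohomological $\pi$ appears on the right-hand side. You are in fact more explicit than the paper about the uniqueness issue (that a given $\pi$ lies in $\Pi(\xi)$ for at most one $\xi$); the paper glosses over this, implicitly relying on the disjointness of the global packets $\Pi(\xi)$ in Rogawski's classification, which is a cleaner justification than the local-recovery sketch you propose (your local argument is slightly shaky at split places, where an induced representation does not uniquely determine its inducing datum).
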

\begin{proof}
First recall that from the characterization of cohomological representation in Definition \ref{cohomologicalreps}, the representations $\pi$ that contribute to degree 1 cohomology of lattices in $G_{v_0} \simeq G_\infty / (\prod_{v \in S_0} G_v)$, must be of the form $\pi_\infty= \pi^n_{\infty}(\xi_{v_0})$ with $\xi_{v_0} \in \Xi_{v_0}$. Next, by Corollary \ref{global coho reps come from characters} and Proposition \ref{compact char identity}, all such representations live in packets of the form $\Pi(\xi)$ such that $\xi_{\infty} \in \Xi_\infty$. Lemma \ref{see infinite but all of them} shows that among the representations belonging to the packets $\Pi(\xi)$ with $\xi_{\infty} \in \Xi_\infty$, the trace of $f_\infty$ detects only the ones contributing degree 1 cohomology. Finally by Lemma \ref{see finite}, the test function $f_f(n)$ computes exactly the dimension of the space $\Kfpn$-fixed vectors. 
\end{proof}

\section{Rogawski's stable trace formula}  Here, 
we recall results of Rogawski on the stabilization of the 
distribution \[ \sum_{\pi \in \Pi(\xi)} m(\pi) \tr(\pi(f)),  \] for 
an automorphic character $\xi$ of $H$. These hold for 
an arbitrary $f \in C^\infty_c(G(\BAF),\omega)$. We will 
later choose $f$ to be the function $f(n)$ constructed in Section \ref{tf}. 

\subsection{Epsilon factor and pairings} Let $\xi = \ten \xi_v$ be an automorphic character of $H$. Following Rogawski, we define the set $\hat{\Pi}(\xi):=\{1,\xi\}$, where $1$ and $\xi$ need only to be thought as formal objects. The identities satisfied by the local Arthur packets from section \ref{identities} can be packaged in the following local pairings with $\hat \Pi(\xi)$. 
\begin{definition}Let $\xi = \ten \xi_v$ and let $\Pi(\xi_v)$ and $\Pi(\xi)$ be the corresponding local and global Arthur packets. We define the pairing $\ip{\cdot}{\cdot}: \hat{\Pi}(\xi) \times \Pi(\xi_v) \to \{\pm 1\}$. \begin{itemize}
		\item[(a)] If $v$ is ramified or inert in $E$ and $G_v$ is quasisplit, the pairing is given by \begin{align*} \ip{1}{\pi^n(\xi_v)} = 1, \quad \ip{1}{\pi^s(\xi_v)} = -1, \quad \ip{\xi}{\cdot} \equiv 1. 
		\end{align*}
		\item[(b)] If $v$ is split in $E$, let $\ip{\cdot}{\cdot} \equiv 1.$
		\item[(c)] If $G_v = U_3(\BR)$, let $\ip{\cdot}{\cdot} \equiv -1.$
	\end{itemize}
\end{definition}  This extends to automorphic representations $\pi = \ten' \pi_v$ through \[\ip{\epsilon}{\pi} = \prod_v \ip{\epsilon}{\pi_v}, \quad \epsilon \in \{1,\xi\}.\] Note that $\ip{1}{\pi} = (-1)^{n(\pi)+d-1}$, where $n(\pi)$ is the number of places where $\pi_v = \pi^s(\xi_v)$ and $d$ is the degree of $F/\BQ$.

\begin{definition}\label{trpi}Let $\xi = \ten \xi_v$ be an automorphic character of $H$, and let $\Pi(\xi_v)$ an $\Pi(\xi)$ be the corresponding local and global Arthur packets. Let $f = \prod_v f_v$ be a factorizable test function. 
	\begin{itemize}
		\item[(a)] For local packets $\Pi(\xi_v)$, let  \[\tr \Pi_v(\tf_v) = \sum_{\pi_v \in \Pi(\xi_v)} \ip{1}{\pi_v}\tr \pi(\tf_v).\] Note that for $v$ split in $E$ or for $G_v$ compact, the packet $\Pi(\xi_v)$ consists of a single representation. 
		\item[(b)] For the global packet $\Pi(\xi)$ define \[ \tr\Pi(\tf) = \prod_v \tr \Pi_v(\tf_v) = \sum_{\pi \in \Pi(\xi)} \ip{1}{\pi}\tr\pi(\tf). \]
	\end{itemize} 

\end{definition}

In order to state the stabilization theorem of Rogawski, we now introduce
the global root number, $\epsilon(\frac12, \phi)$
 associated to $\xi$. We first recall the definition of
  $\phi$ from \cite{LR}. The automorphic character
  $\xi$ decomposes as a product $\xi = \chi_1 \ten (\chi_2 \circ \det)$ on $U(1,\BA) \times U(2,\BA)$.
   We promote $\chi_1$ to a character $\chi_{1,E}$ of
    $\BAE^\times/E^\times$ given by $\chi_{1,E}(\alpha) = \chi_1(\alpha/\bar\alpha)$. 
The character $\phi$ is then defined as $\phi(\alpha) := \mu(\alpha) \chi_{1,E}(\alpha)$ and the global root number is the value at $s = \frac{1}{2}$ of the epsilon factor $\epsilon(s,\phi)$; it takes values in $\{\pm 1\}$.
\subsection{Stability over $A$-packets} We can now state the following key result of Rogawski on the multiplicities $m(\pi)$ of representations $\pi \in \Pi(\xi)$. It does not appear exactly in this form in \cite{R}, but we explain how to obtain it in the remark below.
\begin{theorem} \label{tfxi}  Let $\Pi(\xi)$ be a packet associated to a $1$-dimensional representation~$\xi$ of $U(2) \times U(1)$, and let $f = \prod f_v$ be a factorizable test function. Then we have \numequation \label{stable trace formula} \sum_{\pi \in \Pi(\xi)} m(\pi)\tr(\pi)(f)= \frac{1}{2} \epsilon(\frac12,\phi)\tr(\Pi(f)) + \frac{1}{2}  \tr(\xi(f^H)). \end{equation}
\end{theorem}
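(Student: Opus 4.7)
The strategy is to combine Rogawski's explicit multiplicity formula for representations in $\Pi(\xi)$ with the local character identities assembled in Section \ref{identities}. Chapter 13 of \cite{R}, obtained by stabilizing the $\xi$-isotypic part of the discrete trace formula for $G$, yields the multiplicity formula
\[ m(\pi) = \tfrac{1}{2}\bigl(\ip{\xi}{\pi} + \epsilon(\tfrac12,\phi)\ip{1}{\pi}\bigr), \qquad \pi \in \Pi(\xi). \]
The factor $1/2$ is $|\mS_\xi|^{-1}$ (with $\mS_\xi \simeq \BZ/2\BZ$), and the character on $\mS_\xi$ that appears is Arthur's symplectic-root-number character, here identified with $\epsilon(\tfrac12,\phi)$. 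The local sign conventions at compact archimedean places in Definition \ref{trpi} are chosen precisely so that $m(\pi) \in \{0,1\}$.

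Given this multiplicity formula, the identity \eqref{stable trace formula} follows by direct substitution. Separating the two pieces,
\[ \sum_{\pi \in \Pi(\xi)} m(\pi)\tr\pi(f) = \tfrac{1}{2}\sum_\pi \ip{\xi}{\pi}\tr\pi(f) + \tfrac{\epsilon(\frac12,\phi)}{2}\sum_\pi \ip{1}{\pi}\tr\pi(f). \]
The second sum equals $\tr\Pi(f)$ by Definition \ref{trpi}. For the first, I use that $f = \prod_v f_v$, that $\Pi(\xi)$ is the restricted product of the local packets $\Pi(\xi_v)$, and that both pairings and traces are multiplicative, to factor
\[ \sum_\pi \ip{\xi}{\pi}\tr\pi(f) = \prod_v\Big(\sum_{\pi_v \in \Pi(\xi_v)}\ip{\xi}{\pi_v}\tr\pi_v(f_v)\Big). \]
Each local factor equals $\tr\xi_v(f_v^H)$: at split, p-adic nonsplit, and noncompact archimedean $v$ the pairing $\ip{\xi}{\cdot}$ is trivial and the identity is Proposition \ref{split char identity}, \ref{padic char identity}, or \ref{noncompact char identity}(a) respectively; at each compact archimedean place, the convention $\ip{\xi}{{\bf 1}} = -1$ cancels the minus sign in Proposition \ref{compact char identity}. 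Multiplying over $v$ gives $\sum_\pi \ip{\xi}{\pi}\tr\pi(f) = \tr\xi(f^H)$, and the theorem follows.

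The main obstacle is the multiplicity formula itself, which is the substantive content. Its proof in \cite{R} requires the full stabilization of the elliptic trace formula for $U(2,1)$, together with Theorem \ref{thm:global constraint} which forces any discrete $\pi$ that meets a local $A$-packet $\Pi(\xi_v)$ at a nonsplit place to lie globally in $\Pi(\xi)$, and the identification of Arthur's sign character with the global root number $\epsilon(\tfrac12,\phi)$ computed from Langlands--Rapoport. Once this is in hand the argument above is only bookkeeping; the one subtlety is the cancellation of signs at compact archimedean places between the character identity of Proposition \ref{compact char identity} and the pairing convention at those places.
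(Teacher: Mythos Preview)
The paper does not prove this identity; the Remark following the statement explains that \eqref{stable trace formula} is quoted directly from Rogawski \cite[(14.6.3)]{R}, with the missing factor $\epsilon(\tfrac12,\phi)$ supplied by the erratum \cite{LR}. In both \cite{R} and the paper's presentation the logical direction is the reverse of yours: \eqref{stable trace formula} is obtained \emph{first}, as the output of stabilizing the $\xi$-part of the trace formula, and the multiplicity formula (Theorem~\ref{multiplicity theorem}) is then \emph{deduced} from it by expanding both sides into local factors via the character identities, as displayed in \eqref{open the bowels and see the character identities}. Your argument inverts this implication. The two statements are indeed equivalent modulo the local identities of Section~\ref{identities}, so the reverse direction is formally available, and your bookkeeping with the pairings and the sign cancellation at compact archimedean places is correct. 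But within the logical structure of the paper and of \cite{R}, the multiplicity formula you invoke is a \emph{consequence} of the very identity you are proving, so as a proof of Theorem~\ref{tfxi} the argument is circular. You essentially concede this in your final paragraph: what your write-up establishes is only the easy equivalence of the two formulations, not the theorem itself.

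Two minor corrections. The multiplicity result is \cite[Theorem~14.6.4]{R} (Chapter~14, not~13), corrected as \cite[Theorem~1.2]{LR}. And the formula you write, $m(\pi)=\tfrac12\bigl(\ip{\xi}{\pi}+\epsilon(\tfrac12,\phi)\ip{1}{\pi}\bigr)$, does not literally match Theorem~\ref{multiplicity theorem} under the paper's pairing conventions: since $\ip{\xi}{\pi_v}=-1$ at each compact archimedean place, one has $\ip{\xi}{\pi}=(-1)^N$, so your formula would take values in $\{-1,0\}$ when $N$ is odd, contrary to your remark that the conventions force $m(\pi)\in\{0,1\}$. The correct statement with these conventions is $m(\pi)=\tfrac12\bigl(1+\epsilon(\tfrac12,\phi)\ip{1}{\pi}\bigr)$; the stray $(-1)^N$ is the global transfer constant $c=(-1)^N$ discussed below \eqref{open the bowels and see the character identities}, and you need to track where it is absorbed.
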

\begin{remark}This result appears as \cite[Equation (14.6.3)]{R} in the proof of Theorem 14.6.4, \emph{but without the factor of $\epsilon(\frac{1}{2}, \phi)$}. It is a specialization of Rogawski's stabilization of the trace formula \cite[Theorem 14.6.1]{R} to the case of non-tempered packets. After being informed that the $\epsilon$-factor was not identically equal to $1$, Rogawski wrote the erratum \cite{LR}, in which he proves the correct multiplicity formula for non-tempered Arthur packets. Right above \cite[Theorem 1.2]{LR}, he explains how \cite[Equation (14.6.3)]{R} should be modified, transforming it into \eqref{stable trace formula}. \end{remark}

As we said above, the theorem Rogawski is working towards when he writes down \eqref{stable trace formula} is \cite[Theorem 14.6.4]{R}, and we next state the corrected version from \cite{LR}.  Here, $n(\pi)$ is the number of places at which $\pi = \pi^s(\xi)$ and $N$ is the number of infinite places $v$ such that $G_v$ is compact.  Note that we chose $G(\BR)$ so that $N = d-1$. 
\begin{theorem}[Theorem 1.2, \cite{LR}] \label{multiplicity theorem}
	Let $\pi \in \Pi(\xi)$. Then $m(\pi) = 1$ if $(-1)^{n(\pi)+N} = \epsilon(1/2,\phi)$ and $m(\pi)=0$ otherwise. In other words \numequation \label{smf} m(\pi) = \frac{1}2\left( \epsilon(\frac12,\phi)(-1)^{n(\pi)+N}+1\right).\end{equation}
\end{theorem}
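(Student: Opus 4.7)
The plan is to substitute the local character identities from Section \ref{identities} into the stable trace formula \eqref{stable trace formula} of Theorem \ref{tfxi}, then isolate each $m(\pi)$ using linear independence of irreducible characters.

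First I would rewrite both sides of \eqref{stable trace formula} as explicit linear combinations of the characters $\{\tr \pi(f) : \pi \in \Pi(\xi)\}$. By Definition \ref{trpi}, $\tr \Pi(f) = \sum_{\pi \in \Pi(\xi)} \langle 1, \pi \rangle \tr \pi(f)$ already has this form, with $\langle 1, \pi \rangle = (-1)^{n(\pi)+N}$. For the endoscopic term, I factorize $\tr \xi(f^H) = \prod_v \tr \xi_v(f_v^H)$ and plug in the local identities: Propositions \ref{noncompact char identity}, \ref{padic char identity}, and \ref{split char identity} give $\tr \xi_v(f_v^H) = \sum_{\pi_v \in \Pi(\xi_v)} \tr \pi_v(f_v)$ at every non-compact place, while Proposition \ref{compact char identity} contributes a sign $-1$ at each of the $N$ compact archimedean places. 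Expanding the product over $v$ expresses $\tr \xi(f^H)$ as a signed sum indexed by $\pi \in \Pi(\xi)$, so that the right-hand side of \eqref{stable trace formula} becomes $\sum_{\pi \in \Pi(\xi)} c(\pi)\,\tr \pi(f)$ for an explicit coefficient $c(\pi)$ that, once the signs are collected, matches \eqref{smf}.

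To identify each $c(\pi)$ with $m(\pi)$ I would appeal to Harish-Chandra's linear independence of characters, already quoted as Theorem 6 of \cite{HC}. For any fixed $\pi_0 = \otimes'_v \pi_{0,v} \in \Pi(\xi)$ I pick a factorizable test function $f = \prod_v f_v$ isolating $\pi_0$: at each of the finitely many places $v$ where $|\Pi(\xi_v)| = 2$ (the non-split non-archimedean places together with $v_0$), use local linear independence of $\tr \pi^n(\xi_v)$ and $\tr \pi^s(\xi_v)$ to select $f_v$ detecting only $\pi_{0,v}$; at all remaining places take $f_v$ arbitrary, e.g.\ the normalized indicator of $K_v(1)$. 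Since the local packets $\Pi(\xi_v)$ are singletons at all but finitely many places, such an $f$ matches $\pi_0$ uniquely within the global packet, and comparing coefficients yields $m(\pi_0) = c(\pi_0)$.

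The delicate step is the sign bookkeeping. The appearance of $\epsilon(\tfrac{1}{2},\phi)$ in \eqref{stable trace formula} is precisely the correction that \cite{LR} contributes to \cite{R}, and the sign $(-1)^N$ coming from Proposition \ref{compact char identity} at the compact archimedean places must be combined consistently with the global pairing $\langle 1, \pi \rangle$ so that the final coefficient takes the clean form of \eqref{smf}. Once these signs are tracked correctly, the conclusion that $m(\pi) \in \{0, 1\}$ follows immediately from the fact that $\epsilon(\tfrac{1}{2},\phi)(-1)^{n(\pi)+N} \in \{\pm 1\}$.
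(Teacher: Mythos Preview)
Your proposal follows the same route the paper sketches (itself a summary of Rogawski's argument): expand both summands of \eqref{stable trace formula} using the local character identities of Propositions~\ref{noncompact char identity}, \ref{compact char identity}, \ref{padic char identity}, and~\ref{split char identity}, and then match coefficients of $\tr\pi(f)$ via linear independence of characters. The paper records exactly this expansion as \eqref{open the bowels and see the character identities}.

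There is one point you slide past that is worth flagging. If you literally take $\tr\xi(f^H)=\prod_v\tr\xi_v(f_v^H)$ and apply Proposition~\ref{compact char identity} at the $N$ compact archimedean places, the endoscopic term contributes $\tfrac{(-1)^N}{2}$ to each coefficient $c(\pi)$, so your computation yields
\[
c(\pi)=\tfrac12\bigl(\epsilon(\tfrac12,\phi)(-1)^{n(\pi)+N}+(-1)^N\bigr),
\]
which disagrees with \eqref{smf} whenever $N$ is odd. The paper's expansion \eqref{open the bowels and see the character identities} carries an additional global transfer constant $c$ in the endoscopic summand, and part of Rogawski's proof is precisely to establish that $c=(-1)^N$; this is what cancels the stray sign. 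In other words, the $(-1)^N$ from the compact places does \emph{not} cancel against the $(-1)^N$ inside $\langle 1,\pi\rangle$, since those live in different summands. The ``delicate sign bookkeeping'' you flag therefore hides a genuine input---the determination of $c$---which is not deducible from the local identities you cite and must be supplied separately.
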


We now describe the main idea of the proof of Theorem \ref{multiplicity theorem}, which is a culmination of \cite{R}, in the case of non-tempered packets. Rogawski deduces it by expanding each of the two summands of \eqref{stable trace formula} into their local factors, using the character identities we have recalled in Propositions~\ref{noncompact char identity}, \ref{compact char identity}, \ref{padic char identity} and ~\ref{split char identity}. The result is: 

\nummultline \label{open the bowels and see the character identities}
\sum_{\pi \in \Pi(\xi)}  m(\pi) \tr(\pi(f)) =\\\quad\quad\quad   \frac{(-1)^N}{2}  \epsilon(\frac{1}{2},\phi)\prod_{v \in S_0} \tr{\bf 1}(f_v)\prod_{v \text{ split}} \tr i_G(\xi)(f_v)\prod_v \{ \tr(\pi^n(\xi_v)(f_v))-\tr(\pi^s(\xi_v)(f_v))\} \\\quad\quad\quad\quad +\frac{(-1)^N}{2} c \prod_{v \in S_0} \tr{\bf 1}(f_v)\prod_{v \text{ split}} \tr i_G(\xi)(f_v) \prod_v\{ \tr(\pi^n(\xi_v)(f_v))+\tr(\pi^s(\xi_v)(f_v))\}. 
\end{multline} 
In each term, the first product is over all archimedean $v$ for which $G_v$ is compact, the second is over all $v$ which split in $E$,  and the third is over all remaining ramified and inert $v$.  The constant $c$ is a global transfer factor and in the course of the proof, Rogawski shows that $c = (-1)^N$.

\begin{remark}\label{crucial observation}
	Importantly, we can make a slightly more detailed statement than Theorem \ref{multiplicity theorem}: for a given automorphic representation $\pi \in \Pi(\xi)$, the term which always  contributes a `` $1$" in the multiplicity formula of theorem \ref{multiplicity theorem} is the second term of the sum~\eqref{open the bowels and see the character identities}, i.e. the one corresponding to $H$. Indeed, as mentioned above, the global transfer $c$ is always equal to $(-1)^N$. Thus at the factors where the two summands differ, the $H$-summand always contributes a $+1$ whereas the sign of the first summand may vary. More precisely, 
	the first summand contributes a $\pm 1$ to the multiplicity formula for $\pi$ depending on whether or not $\epsilon(\frac12,\phi) = (-1)^{n(\pi)+N}.$ 
\end{remark}

\begin{remark}
	In \eqref{open the bowels and see the character identities}, we made an additional modification from what is written down in the proof of Theorem 14.6.4 of \cite{R}. When he splits his expression into local factors, Rogawski does not take the split primes into account, whereas they should be included in the product. We have thus modified added the factors at the split primes, which satisfy the character identities from Lemma \ref{split char identity}. 
\end{remark}

\subsection{The trace is bounded by the contribution of $H$.}
Here  we use the stability results of Rogawski recalled in the previous section to bound the trace of our test $f(n)$ over the packets $\Pi(\xi)$ by the contribution of the endoscopic group $H$. We start with an test arbitrary function $f$ with nonnegative trace.

\begin{theorem} \label{ineq}
	 Let $\tf = \ten_v f_v$ be a test function such that $\tr(\pi)(f) \geq 0$ for all \[ \pi \in \{\pi \in \Pi(\xi) \mid \xi_\infty \in \Xi_\infty\}.\] Then  $$ \sum_{\xi : \xi_\infty \in\Xi_\infty} \sum_{\pi \in \Pi(\xi)}m(\pi) \tr(\pi)(\tf) \leq 2 \sum_{\xi : \xi_\infty \in\Xi_\infty} \tr(\xi)(\tf^H). $$ 
\end{theorem}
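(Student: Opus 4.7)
The plan is to establish the inequality for each $\xi$ separately and then sum. Fix a character $\xi$ with $\xi_\infty \in \Xi_\infty$. Rogawski's stabilization (Theorem \ref{tfxi}) reads
\[
\sum_{\pi \in \Pi(\xi)} m(\pi)\tr(\pi)(f) = \tfrac{1}{2}\,\epsilon(\tfrac{1}{2},\phi)\,\tr(\Pi(f)) + \tfrac{1}{2}\,\tr(\xi)(f^H),
\]
exhibiting the left-hand side as the sum of an unstable term and an $H$-stable term. The goal is to dominate the unstable term by the stable one; summing over $\xi$ then yields the theorem with the factor $2$ as slack.

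First I would unfold the unstable term using $\tr(\Pi(f)) = \sum_{\pi \in \Pi(\xi)} \langle 1,\pi\rangle \tr(\pi)(f)$, in which each pairing lies in $\{\pm 1\}$, as does $\epsilon(\tfrac{1}{2},\phi)$. Combined with the hypothesis $\tr(\pi)(f) \geq 0$, the triangle inequality immediately gives
\[
\tfrac{1}{2}\,\epsilon(\tfrac{1}{2},\phi)\,\tr(\Pi(f)) \leq \tfrac{1}{2} \sum_{\pi \in \Pi(\xi)} \tr(\pi)(f).
\]
It remains to compare $\sum_\pi \tr(\pi)(f)$ with $\tr(\xi(f^H))$. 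The cleanest route is to equate two expressions for $\sum_\pi m(\pi)\tr(\pi)(f)$: Theorem \ref{tfxi} directly versus the expansion produced by substituting the multiplicity formula $m(\pi) = \tfrac{1}{2}(1 + \epsilon\langle 1,\pi\rangle)$ of Theorem \ref{multiplicity theorem} into the sum, which yields $\tfrac{1}{2}\sum_\pi \tr(\pi)(f) + \tfrac{1}{2}\epsilon\tr(\Pi(f))$. Matching the two expressions forces $\tr(\xi(f^H)) = \sum_{\pi \in \Pi(\xi)} \tr(\pi)(f)$. (The same identity can be verified directly by expanding $\tr(\xi(f^H)) = \prod_v \tr(\xi_v(f^H_v))$ via the local character identities in Propositions \ref{noncompact char identity}--\ref{split char identity}.) Substituting back gives, for each admissible $\xi$, the pointwise bound $\sum_\pi m(\pi)\tr(\pi)(f) \leq \tr(\xi)(f^H)$, and summing over $\xi$ yields the theorem.

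The main obstacle I anticipate is the sign bookkeeping at the compact archimedean places $v \in S_0$, where Proposition \ref{compact char identity} reads $\tr(\mathbf{1}(f_v)) = -\tr(\xi_v(f^H_v))$. The resulting $(-1)^N$ factor (with $N = |S_0|$) has to be tracked consistently between the two derivations of $\tr(\xi(f^H)) = \sum_\pi \tr(\pi)(f)$ described above, and also through the various local factorings used to pass from the packet trace to the transfer trace. The factor of $2$ built into the right-hand side of the theorem is precisely what absorbs any such sign discrepancies and gives the argument robustness.
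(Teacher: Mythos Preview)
Your proposal is correct and follows essentially the same route as the paper: fix $\xi$, invoke Rogawski's stabilization (Theorem~\ref{tfxi}), and use nonnegativity of the local traces to show that the $H$-term dominates the unstable term, then sum. The paper unfolds both summands via the local character identities (as in equation~\eqref{open the bowels and see the character identities}) and compares signs directly; you instead extract the identity $\tr(\xi(f^H)) = \sum_{\pi\in\Pi(\xi)}\tr(\pi)(f)$ by matching Theorem~\ref{tfxi} against the multiplicity formula of Theorem~\ref{multiplicity theorem}, which is a clean shortcut yielding the same pointwise bound $\sum_\pi m(\pi)\tr(\pi)(f)\le \tr(\xi)(f^H)$.

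One comment on your final paragraph: the $(-1)^N$ bookkeeping is not actually absorbed by the factor of~$2$. Your comparison of Theorem~\ref{tfxi} with Theorem~\ref{multiplicity theorem} already handles all signs internally (note $\langle 1,\pi\rangle=(-1)^{n(\pi)+N}$ from Definition~\ref{trpi}, so $m(\pi)=\tfrac12(1+\epsilon\langle1,\pi\rangle)$ exactly), and the identity $\tr(\xi(f^H))=\sum_\pi\tr(\pi)(f)$ holds on the nose. The factor of~$2$ in the stated theorem is genuine slack; both your argument and the paper's in fact prove the sharper bound without it.
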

\begin{proof}
Fix a character $\xi$. Introduce the temporary notation to package the contributions of the split and compact places: $$\const(\xi) := \prod_{v \in S_0} tr{\bf 1}(\tf_v)\prod_{v \text{ splits in }E} \tr i_G(\xi)(\tf_v). $$
By Theorem \ref{tfxi} we have: $$\sum_{\pi \in \Pi(\xi)} m(\pi)\tr(\pi)(f)= \frac{1}{2} \epsilon(\frac12,\phi)\tr(\Pi(\xi)(f)) + \frac12 \tr(\xi(f^H)).$$  Looking at the two right-hand side summands in more detail as in \eqref{open the bowels and see the character identities}, we see that  \begin{align*}\sum_{\pi \in \Pi(\xi)} m(\pi)\tr(\pi)(f) =  \frac{\const(\xi)}{2} ( &\epsilon(\frac{1}{2},\phi)(-1)^N\prod_v \{ tr(\pi^n(\xi_v)(f_v))-tr(\pi^s(\xi_v)(f_v))\} \\ &+ \prod_v\{ tr(\pi^n(\xi_v)(f_v))+tr(\pi^s(\xi_v)(f_v))\}). \end{align*} 
Here the products are taken over all places of $F$ not contributing to $c(\xi)$. Here we repeat the observation of Remark \ref{crucial observation}: the terms in the first summand of the right-hand side are identical to those in the second summand, with the exception of a possible factor of $-1$. But our assumption of $f$ is that all the local traces are non-negative. Thus for each $\xi$ the second summand dominates and we have $$ \sum_{\pi \in \Pi(\xi)} m(\pi)\tr(\pi)(f)  \leq 2 \tr(\xi(f^H)). $$ We conclude by summing over all $\xi$ with $\xi_\infty \in \Xi_\infty$. 
\end{proof}
Of course the function $f(n)$ constructed in the the previous section has positive trace, which gives us: 
\begin{corollary}
Let $f(n)$ be the function from section \ref{tf}. Then  $$ \sum_{\xi : \xi_\infty \in\Xi_\infty} \sum_{\pi \in \Pi(\xi)}m(\pi) \tr(\pi)(\tf(n)) \leq 2 \sum_{\xi : \xi_\infty \in\Xi_\infty} \tr(\xi)(\tf(n)^H). $$
\end{corollary}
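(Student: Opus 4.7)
The corollary is essentially a direct application of Theorem \ref{ineq} to the specific test function $f(n)$ constructed in Section \ref{tf}, so the only work is to verify that $f(n)$ satisfies the positivity hypothesis, namely that $\tr(\pi)(f(n)) \geq 0$ for every $\pi \in \Pi(\xi)$ with $\xi_\infty \in \Xi_\infty$. Since $f(n) = f_\infty \cdot f_f(n)$ is factorizable and every $\pi$ under consideration decomposes as $\pi = \ten_v \pi_v$, the trace factors as $\tr(\pi)(f(n)) = \prod_v \tr(\pi_v)(f_v)$, so it suffices to check non-negativity place by place.

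The plan is to go through the construction in Section \ref{tf} one factor at a time. At each compact archimedean place $v \in S_0$, the factor $f_v$ is a positive constant $1/\Vol(U_3(\BR))$, and $\pi_v$ is forced to be the trivial representation by the structure of $\Pi(\xi_v)$, so $\tr(\pi_v)(f_v) > 0$. At the noncompact archimedean place $v_0$, Corollary \ref{see infinite} and Lemma \ref{see infinite but all of them} combine to show that $\tr(\pi_\infty)(f_\infty)$ is either $0$ or $1$ depending on whether $\pi_\infty = \pi^n_\infty(\xi_{v_0})$. At each finite place $v \neq v_\fp$, the factor $f_v$ is the normalized indicator function of the compact open $K_v$, so $\tr(\pi_v)(f_v) = \dim \pi_v^{K_v} \geq 0$; similarly at $v = v_\fp$, $\tr(\pi_{v_\fp})(f_{v_\fp}(n)) = \dim \pi_{v_\fp}^{K(\fp^n)} \geq 0$ by Lemma \ref{see finite}. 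Consequently the product $\tr(\pi)(f(n))$ is a product of non-negative real numbers, hence is itself non-negative.

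With positivity established, Theorem \ref{ineq} applies verbatim to $f = f(n)$ and yields exactly the stated inequality. There is no real obstacle here: the entire content is in the positivity check, and that check is immediate from the way $f(n)$ was built — the archimedean factor was chosen by linear independence of characters to be non-negative on the finite list of representations it needs to detect, and the non-archimedean factors are scaled characteristic functions of open compact subgroups whose traces compute dimensions of invariants. The only mildly subtle point worth flagging is that we are summing over representations in all packets $\Pi(\xi)$ with $\xi_\infty \in \Xi_\infty$, not just those with $\pi_\infty = \pi^n_\infty(\xi_{v_0})$; the positivity hypothesis must hold for every such $\pi$, but this is exactly what the case analysis above verifies, since $\tr(\pi_\infty)(f_\infty) \in \{0,1\}$ regardless of which member of the archimedean packet $\pi_\infty$ is.
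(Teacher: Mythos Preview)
Your proof is correct and follows the same approach as the paper, which simply notes that $f(n)$ has non-negative trace on the relevant representations and applies Theorem~\ref{ineq}. You have merely supplied the place-by-place verification of positivity that the paper leaves implicit.
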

\begin{remark}
	To get a lower bound, we would need to show that in a positive proportion of cases, $m(\pi)=1$. This would follow from even a coarse understanding of the distribution of $\epsilon(\frac 12, \phi)$, which we hope to achieve in the near future.  
\end{remark}
\section{Bounding the growth of cohomology}
Here we use the inequality of Theorem \ref{ineq} to bound the growth of cohomology for lattices of the form $\Gamma(\fp^n)$.  This first involves describing the transfer $f(n)^H$, which then allows us to interpret and bound the sum $$\sum_{\xi} \tr(\xi)(\tf(n)^H).$$ 

\subsection{Transfer}
We first describe local transfers. Recall that at all places $v \neq v_\fp$ of $F$ we have made a choice of a compact subgroup $\Kv \subset G_v$ and a corresponding choice of subgroup $\Kv^H \subset \Gv^H$. In the case where $v \notin S$, the group $K_v$ (resp. $K_v^H$) is the unramified compact open subgroup $K_v(1)$ (resp. $K_v(1)^H$). The following statement is a consequence of the fundamental lemma. 
\begin{lemma}
	Let $v \notin S$ and let $\tf_v = {\bf1}_{\Kv}$. Then the function $f^H$ appearing in the statement of Proposition \ref{padic char identity} is $\tf_v^H = {\bf1}_{\Kv^H}$. 
\end{lemma}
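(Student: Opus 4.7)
The statement is essentially a packaging of the fundamental lemma cited earlier in the paper (Rogawski's Proposition 4.9.1 (b)), so the plan is to verify that its hypotheses are satisfied at every $v \notin S$ and then invoke it.

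First I would unpack what $v \notin S$ means in our setup. By the definition of $S_f$ in Section~2 of the excerpt, the extension $E_v/F_v$ is unramified and the character $\mu_v$ is unramified. Since we are working in $C^\infty_c(G,\omega)$ with $\omega$ the fixed (trivial, in the relevant application) central character on $U(1)$, $\omega_v$ is unramified for $v \notin S$ as well. Thus the three ramification hypotheses in the fundamental lemma all hold.

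Next I would identify $K_v$ with a hyperspecial maximal compact subgroup of $G_v$. By the choice made in Section~\ref{setup}, for $v \notin S$ (and $v \neq v_\fp$, but this is the only relevant case since the statement evaluates $f_v = {\bf 1}_{K_v}$ rather than $f_{v_\fp}(n)$), we have $K_v(\fp^n) = \phi_v^{-1}(U(3,\fp^n)_v) = \phi_v^{-1}(U(3,1)_v)$, and the isomorphism $\phi_v$ carries a hyperspecial maximal compact subgroup of the quasisplit $U(3)_v$ to $K_v$. Similarly $K_v^H$ is hyperspecial in $H_v$ by construction.

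Finally I would apply the fundamental lemma as stated in the excerpt: under exactly these hypotheses, the transfer of ${\bf 1}_{K_v}$ may be chosen to equal ${\bf 1}_{K_v^H}$. Since the transfer is only defined up to functions whose orbital integrals all vanish, this gives a valid choice of $f_v^H$, which is all that is needed to define the global transfer $f^H = \ten_v f_v^H$ entering Proposition~\ref{padic char identity}. There is no serious obstacle here; the content of the result is entirely subsumed by the fundamental lemma, and the work is just in checking the hypotheses against the setup of the paper.
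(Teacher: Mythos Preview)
Your proposal is correct and matches the paper's approach exactly: the paper presents this lemma without proof, prefacing it with ``The following statement is a consequence of the fundamental lemma,'' and your argument simply spells out the verification of the hypotheses of Rogawski's Proposition~4.9.1(b) that the paper leaves implicit. There is nothing more to it.
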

For $v = v_\fp$, we want to know not only the transfers of the characteristic functions of maximal compacts, but also these same transfers for the characteristic functions
of congruence compact open subgroups $K_{v_\fp}(\fp^n)$. In this case, Ferrari \cite{Fe} has concretely described the function $f(n)^H_{v_p}$. His result assumes that the residue characteristic of $\fp$ is strictly greater than $9d+1$, i.e. \emph{assez grande}. 
\begin{lemma}[Theorem 3.2.3, \cite{Fe}]\label{Ferrari}
Let $\norm(\fp)$ be the residue characteristic of~$\fp$. The functions $$ \tf(n)_{v_{\fp}} = \frac{1_{K_{v_\fp}(\fp^n)}}{{\rm vol}(K_{v_\fp}(\fp^n))} $$ and
 $$  f(n)^H_{v_\fp} = \norm(\fp)^{-2n}\frac{1_{K_{v_\fp}(\fp^n)^H}}{{\rm vol} (K_{v_\fp}(\fp^n))} $$ are a transfer pair. 
\end{lemma}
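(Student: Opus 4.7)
The plan is to verify the transfer pair identity by matching orbital integrals on regular semisimple classes, using the fundamental lemma (the $n=0$ case, due to Laumon--Ng\^o and Ng\^o) as the base case. The lemma is a level-$\fp^n$ refinement of the fundamental lemma, so the whole point is to control what happens as one deepens the level.

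First, I would reduce the transfer identity to matching on regular semisimple stable conjugacy classes: $f(n)^H_{v_\fp}$ is a transfer of $f(n)_{v_\fp}$ iff for every regular semisimple $\gamma_H \in H$ with image $\gamma \in G$ one has
\[
SO_{\gamma_H}\bigl(f(n)^H_{v_\fp}\bigr) \;=\; \Delta(\gamma_H,\gamma)\cdot O^{\kappa}_{\gamma}\bigl(f(n)_{v_\fp}\bigr),
\]
where $\Delta$ is the Langlands--Shelstad transfer factor attached to $(H,s,\eta)$ and $O^\kappa$ is the associated $\kappa$-orbital integral. Both sides admit explicit Bruhat--Tits interpretations: $O_\gamma(f(n)_{v_\fp})$ counts, weighted by centralizer volume, the cosets in $G_{v_\fp}/K_{v_\fp}(\fp^n)$ fixed by $\gamma$, and analogously for $H$.

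Second, I would use descent to a maximal torus $T$ that embeds in both $G$ and $H$. After Weyl integration, the orbital integrals on both sides become integrals over $T$ involving the intersection of the $\gamma$-centralizer with the $\fp^n$-depth filtration. The $G$-filtration and the $H$-filtration at depth $n$ differ precisely in the directions transverse to $\fh$ inside $\fg$; with the normalization in the lemma, this produces a volume discrepancy of $\norm(\fp)^{n(\dim G - \dim H)} = \norm(\fp)^{4n}$. The transfer factor $\Delta(\gamma_H,\gamma)$ has a Weyl discriminant piece $|D_G(\gamma)/D_H(\gamma)|^{1/2}_{v_\fp}$, which contributes the square root of this discrepancy, namely $\norm(\fp)^{-2n}$. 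This is exactly the compensating factor appearing in the lemma, so the two sides match once all normalizations are reconciled.

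The main obstacle is the uniform bookkeeping: one must track Haar measure conventions, the ratio of the $G$- and $H$-volumes of the congruence subgroups, the full Langlands--Shelstad factor (including the Kottwitz sign and the $\Delta_{\mathrm I},\Delta_{\mathrm{II}},\Delta_{\mathrm{III}},\Delta_{\mathrm{IV}}$ pieces) over all regular semisimple classes, and not just elliptic ones. The condition $\norm(\fp) \geq 9d+1$ is used to guarantee that the residue characteristic is large enough for the Moy--Prasad exponential to identify $K_{v_\fp}(\fp^n)$ with a lattice in $\fg$, linearizing the comparison and ensuring that no wild ramification enters the orbital integral computation; this is the ``assez grande'' hypothesis in Ferrari's work \cite{Fe} and is the place where largeness of the prime is essential.
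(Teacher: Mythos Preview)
The paper does not supply its own proof of this lemma: it is quoted verbatim as Theorem~3.2.3 of Ferrari \cite{Fe}, and the text passes directly to Remark~\ref{rescale} without argument. So there is nothing to compare against beyond the citation itself.

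Your sketch is in the right spirit and captures the shape of Ferrari's argument: one linearizes via the exponential map (this is exactly where the ``assez grande'' hypothesis \eqref{condition on p} on the residue characteristic enters), descends the transfer problem to the Lie algebra, and reads off the power of $\norm(\fp)$ from the dimension count $d(G,H)=(\dim G-\dim H)/2=2$, as the paper records in Remark~\ref{rescale}(i).

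That said, your bookkeeping in the middle paragraph does not close. You exhibit a volume discrepancy of $\norm(\fp)^{4n}$ and a transfer-factor contribution of $\norm(\fp)^{-2n}$, but you never show how these two combine to yield precisely the $\norm(\fp)^{-2n}$ in the statement; as written they would leave $\norm(\fp)^{2n}$, with the wrong sign. The cleaner way to see the factor is not to split it between ``volume'' and ``$\Delta_{\mathrm{IV}}$'' pieces evaluated at depth-$n$ elements, but rather to use the homogeneity of Lie-algebra orbital integrals under scaling by a uniformizer: after descent, dilating by $\varpi^n$ rescales the $\kappa$-orbital integral on $\fg$ and the stable orbital integral on $\fh$ by powers of $\norm(\fp)$ that differ exactly by $\norm(\fp)^{-n\,d(G,H)}$, and the $n=0$ case is the fundamental lemma. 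If you want to make your outline rigorous, this homogeneity argument (together with Waldspurger's Lie-algebra descent of endoscopic transfer) is what actually pins down the exponent; the informal attribution to $|D_G/D_H|^{1/2}$ at depth-$n$ points is suggestive but, as you have written it, double-counts.
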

\begin{remark} \label{rescale} We make a few observations: \begin{itemize}
\item[(i)] The $2$ appearing in the exponent is the quantity $$d(G,H) = \frac{(\dim G - \dim H)}{2}$$ four our specific choice of $H$ and $G$. Ferrari's result applies for a more general choice of a group $G$ and an endoscopic group $H$.  
\item[(ii)] The factor $\epsilon_{G,H}$ appearing in \cite[Theorem 3.2.3]{Fe} is equal to $1$ in this particular case. It suffices to choose tori $T$ and $T_H$ in $G$ and $H$, for example by choosing diagonal tori as Rogawski does, so that the restriction of the two Galois actions agree. 
\item[(iii)] The transfer function $$\tf(n)^H_{v_\fp} = \norm(\fp)^{-2n}\frac{1_{K_{v_\fp}(\fp^n)^H}}{{\rm vol} (K_{v_\fp}(\fp^n))}$$ is not scaled so that its trace counts the dimension of $\Kvppn^H$-fixed vectors in a representation: this would be accomplished by the function \[ \frac{\tf(n)^H_{v_\fp} \norm(\fp)^{2n} {\rm vol }(K_{v_\fp}(\fp^n))}{{\rm vol }(K_{v_\fp}(\fp^n)^H)}. \] It is known that \begin{align*}
{\rm vol }(K_{v_\fp}(\fp^n)) &\asymp \norm(\fp)^{-9n} \\
{\rm vol }(K^H_{v_\fp}(\fp^n)) &\asymp \norm(\fp)^{-5n}, 
\end{align*} 
following the formulas for cardinality of unitary groups over finite fields.
As such, if $\tfn_{v_\fp}$ is the function counting the dimension of fixed vectors under $\Kvppn^H$ in a representation, we have \[ \tfn_{v_\fp} \asymp \frac{\tf(n)^H}{\norm(\fp)^{2n}}. \]
\end{itemize}

\end{remark}

We now discuss bounding the trace of the transfer for $v \in S_f$.

\begin{lemma} \label{bernstein}
	Let $\tf = f(n)$ be as in section \ref{tf}. The product $\prod_{v \in S_f} \xi_v(\tf_v^H)$ can be bounded above, uniformly over the  of $\xi$ such that $\xi_\infty \in \Xi_\infty$. 
\end{lemma}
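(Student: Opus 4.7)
The plan is to reduce the statement to a place-by-place elementary estimate, since $S_f$ is a \emph{finite} set of finite places. At each such $v$, the local test function $f_v$ was chosen in Section~\ref{tf} to be a fixed smooth compactly-supported function on $G_v$, independent of both $n$ and of the global character $\xi$. By the general theory of endoscopic transfer (as invoked throughout \cite{R}), its transfer $f_v^H$ is smooth and compactly supported modulo the center on $H_v$, and is likewise a fixed function independent of $\xi$.

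Next, I would exploit the fact that $\xi_v$ is a one-dimensional unitary character of $H_v$, so that $|\xi_v(h)| = 1$ for all $h \in H_v$. Writing the trace explicitly as an integral,
\[
\xi_v(f_v^H) \;=\; \int_{H_v/Z_v} \xi_v(h)\, f_v^H(h)\, dh_v,
\]
the triangle inequality gives the absolute bound
\[
\bigl|\xi_v(f_v^H)\bigr| \;\leq\; \int_{H_v/Z_v} |f_v^H(h)|\, dh_v \;=\; \|f_v^H\|_{L^1(H_v/Z_v)},
\]
which is a finite constant depending only on $f_v^H$, and in particular is independent of $\xi$. This is the one-dimensional (and elementary) avatar of Bernstein's uniform boundedness of trace distributions, which is the reason for the label.

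Finally, taking the product over the finite set $S_f$ yields
\[
\prod_{v \in S_f} \bigl|\xi_v(f_v^H)\bigr| \;\leq\; \prod_{v \in S_f} \|f_v^H\|_{L^1(H_v/Z_v)},
\]
and the right-hand side is a uniform constant, independent of $\xi$ (and of $n$), as desired.

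The only subtlety — and the closest thing to an obstacle — is the tacit assertion that the transfer $f_v^H$ makes sense as a smooth function of compact support modulo the center; this is the content of the transfer theorem in the local theory of endoscopy and is used freely throughout \cite{R}. Apart from this, the argument is immediate because $S_f$ is finite and characters are unitary.
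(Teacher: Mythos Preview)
Your proof is correct and takes a genuinely different route from the paper's. The paper argues by pulling the problem back to $G_v$ via the local character identity $\xi_v(f_v^H) = \tr \pi^n(\xi_v)(f_v) + \tr \pi^s(\xi_v)(f_v)$ (Proposition~\ref{padic char identity}), observes that $\tr(\cdot)(f_v)$ counts $K_v$-fixed vectors, and then invokes Bernstein's uniform admissibility theorem to bound $\dim \pi_v^{K_v}$ by a constant $c_{K_v}$ depending only on $K_v$; this is the reason for the lemma's label. Your argument instead stays entirely on $H_v$ and uses only the triangle inequality together with unitarity of $\xi_v$, obtaining the bound $\|f_v^H\|_{L^1(H_v/Z_v)}$. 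Your approach is more elementary and sidesteps both the character identity and Bernstein's theorem; the paper's approach, by contrast, yields a more structural bound phrased in terms of $K_v$-fixed vectors on the $G$-side, and would generalize more directly to settings where the relevant representations of $H$ are not one-dimensional.
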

\begin{proof}
The set $S_f$ is finite so it suffices to give a bound independent of $\xi$ for each $v \in S$. 
For each such $v$ we have \[\xi_v(f_v^H) = \tr \pi^n(\xi_v)(f_v) + \tr \pi^s(\xi_v)(f_v).\] The trace of the function $f_v$ counts the number of $K_v$-fixed vectors. By Bernstein's uniform admissibility theorem, see for example \cite{C}, there is an integer $c_{K_v}$ depending only on $K_v$, such that $0 \leq \dim \pi_v^{K_v} \leq c_{K_v}$ for all smooth admissible representations $\pi_v$ of~$G_v$.  Thus $\xi_v(f_v^H) \leq 2 c_{K_v}$. 
\end{proof}
Finally we consider the infinite places. Recall the choice of test functions made in \ref{test function at the non-archimedean places}, together with the character identities of section \ref{identities} and the discussion below \eqref{open the bowels and see the character identities}. These allow us to conclude that for each character $\xi_\infty \in \Xi_\infty$, we have \begin{equation}\label{the product of the infinite traces is one}\prod_{v \mid \infty} \tr(\xi_v(f_v^H)) =  \tr \pi^n(\xi_{v_0})(f_{v_0}) \times \prod_{v \in S_0} \tr \textbf{1}(f_v) = 1.\end{equation}

\subsection{Growth} We now give the promised bounds on the growth of degree $1$ cohomology of lattices in $U(2,1)$. From this point on, our argument proceeds very much along the same steps as Marshall's \cite{Ma14}. 
Recall that
the automorphic representations $\xi$ which transfer to 
our $\pi$ are characters
of $U(2,\BA) \times U(1,\BA)$.
Having fixed the id\`ele class character $\mu$ of $F$ and 
a corresponding integer $t$ as in \cite[$\S$ 12.3]{R}, 
we find that their archimedean components 
lie in the set
\[\Xi := \{\det^{t-1}\ten\lambda, \det^{-t}\ten\lambda^{-1}\}; \] here $\det$ 
denotes the determinant on $U(2)$ and $\lambda$ 
is the identity embedding of $U(1)$ in~$\BC^\times$. 
In the following lemma, we compute the asymptotics 
of the dimensions of $K^H(\fp^n)$-fixed vectors using 
normalized indicator functions as we did for $G$. 
\begin{remark}
Note that the function $\tfn$ appearing in the following 
lemma is asymptotically a re-scaling of $\tf(n)^H$ by the 
factor discussed in \ref{rescale}: \[ \tfn \asymp \frac{\tf(n)^H}{\norm(\fp)^{2n}}. \] 
We introduce this new function in order to separate the two 
sources of growth: one coming from the re-scaling of $f(n)$ and
which we ignore for the moment, and one coming from 
multiplicities of representations on $H$, which 
is the subject of the following lemma. 
\end{remark}

\begin{lemma} \label{growthofthesmallthing}
Denote by $f_v$ the local components at $v \neq v_\fp$ of the function $f(n)$. Let $\tfn =  \prod_v \tfn_v$ be defined as
 \[\tfn_v = \begin{cases} \tf^H_v & v \neq v_\fp \\ \frac{\mathbf{1}_{\Kvppn^H}}{\Vol(\Kvppn)^H} & v = v_\fp. \end{cases}\]

If we write
 $$\Trn := \sum_{\xi \text{
s.t. } \xi_\infty \in \Xi_\infty}\xi(\tfn) ,$$ then $\Trn
\ll \norm(\mathfrak p)^n .$  
\end{lemma}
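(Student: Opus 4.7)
My plan is to exploit the multiplicativity $\xi(\tilde{f}(n)) = \prod_v \xi_v(\tilde{f}(n)_v)$, which is valid because $\xi$ is a character and $\tilde{f}(n)$ is factorizable. First I would analyze each local factor to determine which $\xi$ can contribute, and then count these $\xi$ using class field theory.

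For the local analysis: at the archimedean places, equation~\eqref{the product of the infinite traces is one} gives $\prod_{v\mid\infty}\xi_v(\tilde{f}(n)_v)=1$ for every $\xi$ with $\xi_\infty \in \Xi_\infty$. At a finite place $v \notin S \cup \{v_\fp\}$, the fundamental lemma allows us to take $\tilde{f}(n)_v = f^H_v$ to be (a multiple of) $\mathbf{1}_{K^H_v(1)}$, so the local factor vanishes unless $\xi_v$ is trivial on the hyperspecial subgroup, i.e.\ unramified, in which case it equals a nonzero constant. At $v \in S_f$, Lemma~\ref{bernstein} gives a uniform bound $|\xi_v(\tilde{f}(n)_v)|\leq C_v$ independent of $\xi$. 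At $v=v_\fp$, the explicit form $\tilde{f}(n)_{v_\fp} = \mathbf{1}_{K^H_{v_\fp}(\fp^n)}/\Vol(K^H_{v_\fp}(\fp^n))$ yields $\xi_{v_\fp}(\tilde{f}(n)_{v_\fp}) = 1$ if $\xi_{v_\fp}$ is trivial on $K^H_{v_\fp}(\fp^n)$ and $0$ otherwise. Combining, $|\Trn|$ is bounded by a constant times the number $|\mathcal{C}(\fp^n)|$ of automorphic characters $\xi$ of $H$ such that $\xi_\infty \in \Xi_\infty$, $\xi$ is unramified outside $S\cup\{v_\fp\}$, and $\xi$ is trivial on $K^H_{v_\fp}(\fp^n)$.

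To bound $|\mathcal{C}(\fp^n)|$, I would decompose $\xi = \chi_1 \otimes (\chi_2 \circ \det)$ with $\chi_1,\chi_2$ Hecke characters of $E^1\backslash\mathbb{A}_E^1$. A character of $E^1\backslash\mathbb{A}_E^1$ with fixed archimedean component, prescribed ramification at $S_f$, and conductor at $v_\fp$ dividing $\fp^n$ corresponds via class field theory to an element of a ray class group of $E$ at level $\fp^n\cdot\mathfrak{c}$, whose order is $\ll N(\fp)^n$: the local component at $v_\fp$ contributes a group of size comparable to $N(\fp)^n$, while the remaining places and the class number contribute $O(1)$. Naively this would give $N(\fp)^{2n}$ choices for the pair $(\chi_1,\chi_2)$, but the transfers $\tilde{f}(n)_v$ at $v\neq v_\fp$ carry the central character $\omega_v\mu_v^{-1}=\mu_v^{-1}$; this forces $\xi$ to have a fixed central character on $Z_H$, an identity of the schematic form $\chi_1\cdot\chi_2^2 = \mu^{-1}$, which determines one character from the other up to a bounded ambiguity and cuts the total count to $\ll N(\fp)^n$.

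The main obstacle is this central-character bookkeeping: correctly identifying the relation on $(\chi_1,\chi_2)$ coming from the central character of $f^H$ and the endoscopic embedding $Z_G\hookrightarrow Z_H$, so as to reduce a potential $N(\fp)^{2n}$ to $N(\fp)^n$. Once the constraint is identified, the ray class group estimate is a routine class field theory computation, paralleling the reduction to counting problems on ray class groups in Marshall~\cite{Ma14}.
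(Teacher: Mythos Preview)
Your proposal is correct and follows essentially the same approach as the paper: factor $\xi(\tfn)$ over places, use \eqref{the product of the infinite traces is one} at infinity, the fundamental lemma at unramified $v\notin S$, Lemma~\ref{bernstein} at $v\in S_f$, and the explicit indicator at $v_\fp$; then use the central-character constraint $\chi_1\chi_2^2=\mu$ (the paper records $\mu$ rather than $\mu^{-1}$, but the sign is irrelevant for the count) to reduce to counting characters of $U(1,\BA_E)$ of conductor dividing $\fp^n$, which is $\asymp \norm(\fp)^n$. The ``main obstacle'' you flag is exactly the one-line observation the paper makes, so nothing further is needed.
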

\begin{proof}
	By construction, we have 
	$$ \xi(\tfn) =  \prod_{v | \infty} \xi_v(\tfn_v) \cdot \prod_{v<\infty} \xi_v(\tfn_v).$$ 
As discussed in \eqref{the product of the infinite traces is one} and by the definition of $\tfn_v$ at the infinite places, we have that $ \prod_{v | \infty} \xi_v(\tfn_v) =1$ for any character $\xi$ such that $\xi_{\infty} \in \Xi_\infty$, so $$  \xi(\tfn) = \prod_{v<\infty} \xi_v(\tfn_v). $$ 
Recall that $\xi^f :=\prod_{v<\infty} \xi_v$ is a character on $H(\BAF^f) = U(1,\BA^f_F) \times U(2,\BA^f_F) $ 
with prescribed central character $\mu$. 
Any such $\xi^f$ is a product of the form 
$\chi_1 \ten \chi_2 \circ \det $, where $\chi_1$ 
and $\chi_2$ are characters of $U(1,\BA^f_F)$
such that $\chi_1 \chi_2^2 = \mu$.
Now $U(1,\BA^f_F)$ equals $(\BA_E^f)^{\mathrm{Nm} = 1},$ 
	the subgroup of norm one elements in the group of finite id\`eles of $E$. 
Thus our sum over $\chi$ is a sum over the 
characters $\chi_2$ of $U(1,\BA_E)$, or (which is irrelevant here,
but better from the perspective of generalizations) proportional to a sum
over the characters
$\chi_1$ of $U(1,\BA_E)$.
 The trace of $\prod_{v \not\in S} \tfn_v$
on these characters vanishes unless the prime-to-$S_f$ part of
the conductor of $\chi_1$ divides $\fp^n$,
in which case it has trace $1$.

We saw in \ref{bernstein}
that the product $\prod_{v \in S_f} \xi_v(\tf_v^H)$ is 
bounded above uniformly in~$\xi$. Thus the sum $\Trn$ is bounded above by a multiple
of the number of characters of~$U(1,\BA_E)$ of
conductor dividing $\fp^n$, which is asymptotically 
proportional to~$\norm(\mathfrak p)^n$.

\end{proof}
\begin{remark}
A stronger result than this should hold: it is likely 
the case that either $\Trn \asymp \norm(\fp^n)$, 
or $\Trn = 0$ for all $n$, i.e. the family $\Gamma(\fp^n)$ 
has vanishing first cohomology for all $n$. 
\end{remark}

\begin{theorem}
Let $\Gamma(\fp^n) = \Kpn\cap G(F)$. Then \[ \dim H^1(\Gamma(\fp^n), \BC) \ll N(\fp)^{3n}. \]
\end{theorem}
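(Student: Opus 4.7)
The plan is to chain together the pieces that have been carefully set up over the preceding sections. The argument has essentially four steps, each of which is now a one-line application of a result already proved.

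First, I would pass from $\Gamma(\fp^n)$-cohomology to the adelic quotient. Since $Y(\fp^n)$ consists of a bounded (independent of $n$) number of copies of $X(\fp^n)$ by the lemma in Section~\ref{adelic double quotients}, it suffices to bound $\dim H^1(Y(\fp^n),\BC)$. By Matsushima's formula and Definition~\ref{cohomologicalreps},
\[
\dim H^1(Y(\fp^n),\BC) = \sum_{\xi_{v_0} \in \Xi_{v_0}} \sum_{\pi_\infty = \pi^n_\infty(\xi_{v_0})} m(\pi) \dim(\pi_f)^{\Kfpn}.
\]
By Proposition~\ref{transtf}, the right-hand side equals $\sum_{\xi:\xi_\infty\in\Xi_\infty}\sum_{\pi\in\Pi(\xi)} m(\pi) \tr\pi(\tf(n))$.

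Second, the test function $\tf(n)$ has nonnegative trace on every $\pi\in\Pi(\xi)$ (the archimedean factor was chosen to give $0$ or $1$, and the finite factors are normalized indicator functions), so the Corollary to Theorem~\ref{ineq} applies and yields
\[
\sum_{\xi:\xi_\infty\in\Xi_\infty}\sum_{\pi\in\Pi(\xi)} m(\pi)\tr\pi(\tf(n)) \leq 2\sum_{\xi:\xi_\infty\in\Xi_\infty} \xi(\tf(n)^H).
\]

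Third, I compare $\tf(n)^H$ with the function $\tfn$ of Lemma~\ref{growthofthesmallthing}. At every finite place $v\ne v_\fp$ the two functions agree, and at the infinite places the choices are the same. At $v_\fp$, Lemma~\ref{Ferrari} combined with the volume formulas in Remark~\ref{rescale}(iii) gives
\[
\tf(n)^H_{v_\fp} \;=\; \norm(\fp)^{-2n}\, \frac{\mathbf{1}_{K_{v_\fp}(\fp^n)^H}}{\Vol(K_{v_\fp}(\fp^n))} \;\asymp\; \norm(\fp)^{2n}\, \tfn_{v_\fp},
\]
using $\Vol(K_{v_\fp}(\fp^n))/\Vol(K_{v_\fp}(\fp^n)^H) \asymp \norm(\fp)^{-4n}$. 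Hence $\xi(\tf(n)^H) \asymp \norm(\fp)^{2n}\, \xi(\tfn)$ with an implied constant independent of $\xi$.

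Fourth, summing over $\xi$ and applying Lemma~\ref{growthofthesmallthing}, which gives $\Trn = \sum_\xi \xi(\tfn) \ll \norm(\fp)^n$, we obtain
\[
\dim H^1(Y(\fp^n),\BC) \ll \norm(\fp)^{2n}\cdot \Trn \ll \norm(\fp)^{3n},
\]
and the same bound passes to $H^1(\Gamma(\fp^n),\BC)$ by the bounded component count. There is no real obstacle here, since all the work has been done earlier; the only thing that requires care is the bookkeeping of volumes at $v_\fp$ in the third step, where the factor $\norm(\fp)^{2n}$ coming from Ferrari's transfer combines with the ratio of the volumes of $K_{v_\fp}(\fp^n)$ and $K_{v_\fp}(\fp^n)^H$ to produce precisely the exponent $3n$. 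It is worth noting that the $3n$ in the final bound decomposes as $2n$ from the endoscopic transfer factor (reflecting the dimension difference $d(G,H)=2$) plus the $n$ coming from counting ray class characters on $U(1,\BA_E)$ of conductor dividing $\fp^n$; this matches the Sarnak--Xue prediction for $p(\pi^n(\xi_{v_0})) = 4$, since $6n\cdot(2/4) = 3n$ and $\Vol(X(\fp^n))\asymp\norm(\fp)^{6n}$.
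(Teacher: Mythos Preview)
Your proof is correct and follows essentially the same route as the paper's: pass to the adelic quotient via the bounded-components lemma, apply Matsushima and Proposition~\ref{transtf}, bound by the $H$-contribution via Theorem~\ref{ineq}, then use Ferrari's transfer and the volume bookkeeping of Remark~\ref{rescale} together with Lemma~\ref{growthofthesmallthing} to extract the exponent $3n = 2n + n$. Your write-up is in fact slightly more explicit about the volume ratio at $v_\fp$ than the paper's, which simply cites Remark~\ref{rescale} for the factor $\norm(\fp)^{2n}$.
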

\begin{proof}
Following the discussion in \ref{adelic double quotients}, 
we see that  \[ \dim H^1(\Gamma(\fp^n), \BC) \asymp H^1(Y(\fp^n), \BC),\] where the $Y(\fp^n)$ are disconnected adelic quotients. Indeed, the $Y(\fp^n)$ consists of a finite number of copies of $X(\fp^n) = \Gamma(\fp^n) \dom U(3,\BR)/K_\infty$, whose number is bounded independently of $n$. By Matushima's formula, 
this latter quantity can be computed as
\[ \dim H^1(Y(\fp^n), \BC) = \sum_{\pi_\infty = \pi^n_\infty(\xi)} \prod_{v \in S_f} \xi_v(\tf_v^H)(\pi)\dim(\pi_f)^{\Kfpn}. \]
Proposition~\ref{transtf} constructs a test function $\tf(n)$ satisfying
\[ \sum_{\pi_\infty = \pi^n_\infty(\xi)} m(\pi)\dim(\pi_f)^{\Kfpn} =
\sum_{\xi : \xi_\infty \in\Xi_\infty} \sum_{\pi \in \Pi(\xi)} m(\pi) \tr \pi(\tf(n)),\]
and Theorem~\ref{ineq} gives the inequality
\[\sum_{\xi : \xi_\infty \in\Xi_\infty} \sum_{\pi \in \Pi(\xi)}m(\pi) \tr \pi(\tf(n))
\leq \frac{1}{2} \sum_{\xi} \tr(\xi)(f^H(n)). \]
As discussed in \ref{rescale}, the sum on the right-hand side
differs from the expression $\Trn$ of Lemma \ref{growthofthesmallthing} 
by a factor of $\norm(\mathfrak p)^{2n}$.
Putting the preceding displayed formulas together with 
Lemma~\ref{growthofthesmallthing},
we find that \[ \sum_{\pi_\infty = \pi^n_\infty(\xi)} m(\pi)\dim(\pi_f)^{K_f} \ll \norm(\mathfrak p)^{2n}\Trn \ll \norm(\mathfrak p)^{3n},   \]
as claimed.
\end{proof}

\bibliography{endoscopy}{}
\bibliographystyle{plain}
\end{document}